\let\mathcal\mathscr
\newfont{\teneufm}{eufm10}
\newfont{\seveneufm}{eufm7}
\newfont{\fiveeufm}{eufm5}
\def\bbbc{{\mathchoice {\setbox0=\hbox{$\displaystyle\rm C$}\hbox{\hbox
to0pt{\kern0.4\wd0\vrule height0.9\ht0\hss}\box0}}
{\setbox0=\hbox{$\textstyle\rm C$}\hbox{\hbox
to0pt{\kern0.4\wd0\vrule height0.9\ht0\hss}\box0}}
{\setbox0=\hbox{$\scriptstyle\rm C$}\hbox{\hbox
to0pt{\kern0.4\wd0\vrule height0.9\ht0\hss}\box0}}
{\setbox0=\hbox{$\scriptscriptstyle\rm C$}\hbox{\hbox
to0pt{\kern0.4\wd0\vrule height0.9\ht0\hss}\box0}}}}
\def\bbbq{{\mathchoice {\setbox0=\hbox{$\displaystyle\rm
Q$}\hbox{\raise 0.15\ht0\hbox to0pt{\kern0.4\wd0\vrule
height0.8\ht0\hss}\box0}} {\setbox0=\hbox{$\textstyle\rm
Q$}\hbox{\raise 0.15\ht0\hbox to0pt{\kern0.4\wd0\vrule
height0.8\ht0\hss}\box0}} {\setbox0=\hbox{$\scriptstyle\rm
Q$}\hbox{\raise 0.15\ht0\hbox to0pt{\kern0.4\wd0\vrule
height0.7\ht0\hss}\box0}} {\setbox0=\hbox{$\scriptscriptstyle\rm
Q$}\hbox{\raise 0.15\ht0\hbox to0pt{\kern0.4\wd0\vrule
height0.7\ht0\hss}\box0}}}}
\def\bbbt{{\mathchoice {\setbox0=\hbox{$\displaystyle\rm
T$}\hbox{\hbox to0pt{\kern0.3\wd0\vrule height0.9\ht0\hss}\box0}}
{\setbox0=\hbox{$\textstyle\rm T$}\hbox{\hbox
to0pt{\kern0.3\wd0\vrule height0.9\ht0\hss}\box0}}
{\setbox0=\hbox{$\scriptstyle\rm T$}\hbox{\hbox
to0pt{\kern0.3\wd0\vrule height0.9\ht0\hss}\box0}}
{\setbox0=\hbox{$\scriptscriptstyle\rm T$}\hbox{\hbox
to0pt{\kern0.3\wd0\vrule height0.9\ht0\hss}\box0}}}}
\def\bbbs{{\mathchoice
{\setbox0=\hbox{$\displaystyle     \rm S$}\hbox{\raise0.5\ht0\hbox
to0pt{\kern0.35\wd0\vrule height0.45\ht0\hss}\hbox
to0pt{\kern0.55\wd0\vrule height0.5\ht0\hss}\box0}}
{\setbox0=\hbox{$\textstyle        \rm S$}\hbox{\raise0.5\ht0\hbox
to0pt{\kern0.35\wd0\vrule height0.45\ht0\hss}\hbox
to0pt{\kern0.55\wd0\vrule height0.5\ht0\hss}\box0}}
{\setbox0=\hbox{$\scriptstyle      \rm S$}\hbox{\raise0.5\ht0\hbox
to0pt{\kern0.35\wd0\vrule height0.45\ht0\hss}\raise0.05\ht0\hbox
to0pt{\kern0.5\wd0\vrule height0.45\ht0\hss}\box0}}
{\setbox0=\hbox{$\scriptscriptstyle\rm S$}\hbox{\raise0.5\ht0\hbox
to0pt{\kern0.4\wd0\vrule height0.45\ht0\hss}\raise0.05\ht0\hbox
to0pt{\kern0.55\wd0\vrule height0.45\ht0\hss}\box0}}}}
\def\bbbz{{\mathchoice {\hbox{$\sf\textstyle Z\kern-0.4em Z$}}
{\hbox{$\sf\textstyle Z\kern-0.4em Z$}} {\hbox{$\sf\scriptstyle
Z\kern-0.3em Z$}} {\hbox{$\sf\scriptscriptstyle Z\kern-0.2em
Z$}}}}
\newtheorem{theorem}{Theorem}
\newtheorem{lemma}[theorem]{Lemma}
\newtheorem{cor}[theorem]{Corollary}
 \numberwithin{equation}{section}
  \numberwithin{theorem}{section}
\def\squareforqed{\hbox{\rlap{$\sqcap$}$\sqcup$}}
\def\qed{\ifmmode\squareforqed\else{\unskip\nobreak\hfil
\penalty50\hskip1em\null\nobreak\hfil\squareforqed
\parfillskip=0pt\finalhyphendemerits=0\endgraf}\fi}
\def\cM{{\mathcal M}}
\def\cO{{\mathcal O}}
\def \sf {\mathfrak s}
\newcommand{\ignore}[1]{}
\def \C{\mathbb{C}}
\def \Z{\mathbb{Z}}
\def \R{\mathbb{R}}
\def \Q{\mathbb{Q}}
\def \N{\mathbb{N}}
\def\mand{\qquad\mbox{and}\qquad}
\def\\{\cr}
\def\({\left(}
\def\){\right)}
\def\fl#1{\left\lfloor#1\right\rfloor}
\def\eps{\varepsilon}
\def\fB{{\mathfrak B}}
\newcommand\wH{{\rm H}}
\newcommand\vb{{\bf v}}
\newcommand\xb{{\bf x}}
\newcommand\zb{{\bf z}}
\def\bx{\mathbf x}
\def\bu{\mathbf u}
\newcommand\p{\mathfrak{p}}
\begin{document}

\title[Multiplicatively dependent vectors]{On the distribution of multiplicatively dependent vectors}

\author[S. Konyagin]{Sergei V.~Konyagin}
\address{Steklov Mathematical Institute,
8, Gubkin Street, Moscow, 119991, Russia} 
\email{konyagin@mi-ras.ru}

\author[M. Sha]{Min Sha}
\address{School of Mathematical Sciences, South China Normal University, Guangzhou, 510631, China}
\email{min.sha@m.scnu.edu.cn}

\author[I. E. Shparlinski]{Igor E. Shparlinski}
\address{Department of Pure Mathematics, University of New South Wales,
 Sydney, NSW 2052, Australia}
\email{igor.shparlinski@unsw.edu.au}

\author[C. L. Stewart]{Cameron L. Stewart}
\address{Department of Pure Mathematics, University of Waterloo, 
Waterloo, Ontario, N2L 3G1, Canada}
\email{cstewart@uwaterloo.ca}

\begin{abstract} 
In this paper, we study the distribution of multiplicatively dependent vectors. 
For example, although they have zero Lebesgue measure, they are everywhere dense both in $\R^n$ and $\C^n$. 
We also study this property in a more detailed manner by considering the covering radius of such vectors. 
\end{abstract}

\keywords{Multiplicatively dependent vectors, density, covering radius}
\subjclass[2010]{11N25, 11R04}

\maketitle

\section{Introduction}

 \subsection{Background }
Let $n \ge 2$ be a positive integer, $R$ be a ring with identity and let $\vb=(v_1,\dots,v_n)$ be in $R^n$. 
We say that the vector $\vb$ is multiplicatively dependent if all its coordinates are non-zero and 
there is a non-zero integer vector $\mathbf{k}=(k_1,\dots,k_n)$ in $\mathbb{Z}^n$ for which
\begin{equation} 
\label{eq:0}
v^{k_1}_1\cdots v^{k_n}_n=1.
\end{equation} 
Let $S$ be a subset of $R$. We denote by $\cM_n(S)$ the set of multiplicatively dependent vectors with coordinates in $S$. 

In 2018 Pappalardi, Sha, Shparlinski and Stewart~\cite{PSSS} gave asymptotic estimates 
for the number of multiplicatively dependent vectors whose coordinates are  algebraic numbers 
of bounded height and of fixed degree or within a fixed number field. 
For example, it follows from~\cite[Equation~(1.16)]{PSSS} that 
for any integer $n \ge 2$ there is a positive number $c_0(n)$ such that 
the number of elements of  $\cM_n(\Z)$ whose coordinates are at most $H$ in absolute value is
\begin{equation} 
\label{eq:count H_Z}
n(n+1)(2H)^{n-1} + O\(H^{n-2} \exp(c_0(n)\log H/\log \log H)\).
\end{equation} 

The multiplicative dependence of algebraic numbers has also been  studied from other aspects. 
These include bounding the heights of multiplicatively dependent algebraic numbers (see~\cite{Stew}),  
studying points on an algebraic curve whose coordinates are non-zero algebraic numbers and multiplicatively dependent (see~\cite{BCMOS, BS, BMZ, OSSZ1}), 
 investigating multiplicative dependence of rational values 
(see~\cite{DS2016, OSSZ2}), considering multiplicative dependence among iterated values of rational functions (see~\cite{BOSS, OSSZ2}), 
and studying multiplicative dependence modulo groups (see~\cite{BBMOS, BOSS}).

In this paper, we  study the distribution of the elements of $\cM_n(S)$ 
when $S$ is a subset of the real numbers $\R$ or the complex numbers $\C$ with number theoretic interest. 
Note that the sets  $\cM_n(\R)$ and  $\cM_n(\C)$ 
have zero Lebesgue measure, since they are countable unions of hypersurfaces and each hypersurface in $\R^n$ or $\C^n$ has zero Lebesgue measure. 
On the other hand, our results imply that $\cM_n(\R)$ and $\cM_n(\C)$ are dense in $\R^n$ and $\C^n$ respectively; 
see Theorem~\ref{thm:Dense_S_R} and Theorem~\ref{thm:Dense_S_C}.

Let $K$ be a number field, which we always identify with one of its models, 
that is, $K=\Q(\alpha)$ for some algebraic number $\alpha$. Recall, that alternatively, one can 
think of $K$ as
$\Q[X]/f(X)\Q[X]$ for an irreducible polynomial $f(X) \in \Z[X]$ and then consider its various embeddings in $\C$ 
and $\R$.

As usual, we define the degree of $K$ to be the degree $[K:\Q]$ of the field extension $K/\Q$.  
Let $\cO_K$ denote the ring of integers of $K$. 
We study the distribution of $\cM_n(K)$ and $\cM_n(\cO_K)$ in $\R^n$ and also in $\C^n$. 
Among other results, we prove that $\cM_n(K\cap \R)$ is dense in $\R^n$, and $\cM_n(\cO_K \cap \R)$ is dense in $\R^n$ if $\cO_K \cap \R \ne \Z$. 
Further, $\cM_n(K)$ is dense in $\C^n$ if $K \subsetneq \R$, 
and $\cM_n(\cO_K)$ is dense in $\C^n$ if $K \subsetneq \R$ and $[K:\Q] \ge 3$. 
Then, to study the cases of $\cM_n(\Z)$, which is not dense in $\R^n$, and of $\cM_n(\cO_K)$ when $K$ is an imaginary quadratic field, 
which is not dense in $\C^n$, we introduce a refinement of the notion of the covering radius of a set 
and use it to show that there are significant irregularities in the distribution of the elements of $\cM_n(\Z)$ in $\R^n$ and of $\cM_n(\cO_K)$ in $\C^n$.

 \subsection{Density results for multiplicatively dependent vectors}

We say that a subset $S$ of a ring $R$ is \textit{closed under powering} if for any $\alpha$ in $S$ we also have 
$\alpha^m$ in $S$ for every non-zero integer $m$.

\begin{theorem}
\label{thm:Dense_S_R} 
Let  $n \ge 2$ and let $S$ be a dense subset of $\R$
which is closed under powering.
Then $\cM_n(S)$ is dense in $\R^n$.
\end{theorem}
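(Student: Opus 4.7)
The plan is to approximate any target $\vec x = (x_1,\ldots,x_n)\in\R^n$ to within a prescribed $\varepsilon>0$ by a vector $\vb\in S^n$ whose coordinates are all non-zero integer powers of a single carefully chosen $a\in S$. Writing $v_i=a^{m_i}$ with $m_i\in\Z\setminus\{0\}$, the closure of $S$ under powering forces $v_i\in S$, and the identity $v_1^{m_2}v_2^{-m_1}=a^{m_1m_2-m_1m_2}=1$ supplies a non-trivial multiplicative relation with exponent vector $(m_2,-m_1,0,\ldots,0)\neq\mathbf{0}$ (this vector is non-zero because every $m_i$ will be chosen non-zero).

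I would first pick $a\in S$ with $|a|$ close to $1$: take $a>0$ if none of the $x_i$ are negative, and $a<0$ (close to $-1$) otherwise. Density of $S$ in $\R$ makes this possible with $\bigl|\log|a|\bigr|$ as small as desired. Next, for each $x_i\ne0$, I would select $m_i$ to be the nearest non-zero integer to $\log|x_i|/\log|a|$ that also meets the parity condition forcing $\mathrm{sign}(a^{m_i})=\mathrm{sign}(x_i)$: no constraint if $a>0$, while if $a<0$ then $m_i$ must be even when $x_i>0$ and odd when $x_i<0$. For any coordinate with $x_i=0$, I take $|m_i|$ large enough that $|a|^{|m_i|}<\varepsilon$.

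For the approximation, the parity-constrained non-zero rounding forces $\bigl|m_i\log|a|-\log|x_i|\bigr|\le C\bigl|\log|a|\bigr|$ for a small absolute constant $C$, and since $v_i$ and $x_i$ share the same sign by construction,
\[
|v_i-x_i| = |x_i|\,\bigl|\exp\bigl(m_i\log|a|-\log|x_i|\bigr)-1\bigr|\le|x_i|\,\bigl(\exp\bigl(C\bigl|\log|a|\bigr|\bigr)-1\bigr).
\]
This tends to $0$ uniformly in $i$ as $|a|\to1$, so taking $|a|$ sufficiently close to $\pm 1$, and $|m_i|$ sufficiently large for any zero coordinates, forces $|v_i-x_i|<\varepsilon$ for every $i$.

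The only delicate point is the sign bookkeeping when $\vec x$ has coordinates of both signs, which forces $a<0$ and places parity constraints on each $m_i$; an auxiliary annoyance is that when $|x_i|$ is very close to $1$ the ``optimal'' $m_i$ would be zero and must be replaced by $\pm 1$ or $\pm 2$ as parity dictates. In either case the rounding radius in log-scale remains $O\bigl(|\log|a||\bigr)$ and hence still shrinks to zero as $|a|\to1$, so these issues do not represent a genuine obstacle.
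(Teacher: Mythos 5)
Your proposal is correct and takes essentially the same approach as the paper: approximate each coordinate by integer powers $a^{m_i}$ of a single element $a\in S$ with $|a|$ close to $1$, resolving the sign constraint through the choice of exponent. The paper first reduces to $n=2$ and then uses the two consecutive powers $\alpha^k,\alpha^{k+1}$ of a negative $\alpha$ near $-1$ to cover both sign possibilities at once, whereas you impose a parity condition on each $m_i$; these are cosmetic variations on the same idea.
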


We remark that if $S$ is a dense subset of $\R$ which is not closed under powering, then  $\cM_n(S)$ may not be dense in $\R^n$.
For example, let $S$ be the set of all rational numbers of the form $p/q$ or $-p/q$ with distinct primes   $p, q$. 
Then by~\cite[Theorem~4]{HS} $S$ is dense in $\R$, but  $\cM_n(S)$ is not dense in $\R^n$ for any $n \ge 2$ 
(see Section~\ref{sec:rem} for more details). 

 Since the rationals are dense in $\R$ and closed under powering, we deduce the following result. 

\begin{cor}
\label{cor:Dense_Q_R} 
Let $n  \ge 2$.  
Then   $\cM_n(\Q)$ is dense in $\R^n$. 
\end{cor}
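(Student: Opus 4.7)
The plan is simply to check that $\Q$ (or more precisely $\Q\setminus\{0\}$, since multiplicative dependence requires non-zero coordinates) satisfies the two hypotheses of Theorem~\ref{thm:Dense_S_R} and then invoke that theorem directly. The first hypothesis, density of $\Q$ in $\R$, is one of the first facts one learns about the reals: between any two distinct real numbers there lies a rational. The second hypothesis, closure under powering, is equally immediate: for any non-zero rational $\alpha = a/b$ with $a,b\in\Z\setminus\{0\}$ and any non-zero $m\in\Z$, the power $\alpha^m$ equals $a^m/b^m$ when $m>0$ and $b^{|m|}/a^{|m|}$ when $m<0$, and in either case it is a non-zero rational number.

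With these two verifications in hand, Theorem~\ref{thm:Dense_S_R} applied to $S=\Q$ (or $S=\Q\setminus\{0\}$) yields at once that $\cM_n(\Q)$ is dense in $\R^n$. There is no genuine obstacle to surmount: the corollary is presented precisely to showcase the cleanest instance of the theorem, and one should not expect its proof to require more than a sentence referencing Theorem~\ref{thm:Dense_S_R}. Any effort here should be spent on Theorem~\ref{thm:Dense_S_R} itself, not on the corollary.
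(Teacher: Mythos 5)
Your proposal is correct and matches the paper's own argument exactly: the paper derives Corollary~\ref{cor:Dense_Q_R} in one line by noting that $\Q$ is dense in $\R$ and closed under powering, then applying Theorem~\ref{thm:Dense_S_R}. Your slightly more careful remark about $\Q\setminus\{0\}$ is harmless and does not change anything, since multiplicatively dependent vectors already have non-zero coordinates by definition.
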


Let $K$ be a number field of degree at least $2$. Plainly $\cM_n(K \cap \R)$ is dense in $\R^n$ by Corollary~\ref{cor:Dense_Q_R} 
since $\Q$ is contained in $K\cap \R$. 
Furthermore, if $\cO_K \cap \R \ne \Z$, 
then $\cO_K \cap \R$ is easily seen to be dense in $\R$, and since it is closed under powering we have the following result.

\begin{cor}
\label{cor:Dense_OK_R} 
Let $n \ge 2$, and let $K$ be a number field.  
If $\cO_K \cap \R \ne \Z$,  then   $\cM_n(\cO_K \cap \R)$ is dense in $\R^n$. 
\end{cor}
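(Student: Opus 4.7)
The plan is to deduce this corollary by a direct appeal to Theorem~\ref{thm:Dense_S_R} with $S=\cO_K\cap\R$. To invoke the theorem I must check two things: that $\cO_K\cap\R$ is dense in $\R$, and that it is closed under powering. The second point is immediate: $\cO_K$ is a ring and $\R$ is a ring, so $\cO_K\cap\R$ is a subring of each, and in particular $\alpha^m\in\cO_K\cap\R$ whenever $\alpha\in\cO_K\cap\R$ and $m$ is a positive integer. (It is this closure under positive powers that the hypothesis of Theorem~\ref{thm:Dense_S_R} really requires, since a ring of integers contains non-units.)

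The density claim is where the hypothesis $\cO_K\cap\R\ne\Z$ enters. I would argue as follows. Because $\cO_K\cap\Q=\Z$ (a rational algebraic integer is a rational integer), the assumption that $\cO_K\cap\R$ properly contains $\Z$ guarantees the existence of some $\alpha\in\cO_K\cap\R$ with $\alpha\notin\Q$, that is, an irrational element. Then $\Z+\Z\alpha\subseteq \cO_K\cap\R$, and by the classical Kronecker equidistribution result for multiples of an irrational number, the subgroup $\Z+\Z\alpha$ is already dense in $\R$. Consequently $\cO_K\cap\R$ is dense in $\R$.

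Having verified both hypotheses, Theorem~\ref{thm:Dense_S_R} applied to $S=\cO_K\cap\R$ yields that $\cM_n(\cO_K\cap\R)$ is dense in $\R^n$, which is the desired conclusion.

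I do not anticipate any genuine obstacle here: all the nontrivial work has been absorbed into Theorem~\ref{thm:Dense_S_R}. The only subtle point to articulate carefully is the distinction between the intrinsic failure of $\cO_K\cap\R$ to be closed under inversion and the fact that the proof of Theorem~\ref{thm:Dense_S_R} really only needs closure under positive integer powers, which is automatic since $\cO_K\cap\R$ is a ring.
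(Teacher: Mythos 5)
Your parenthetical claim that the proof of Theorem~\ref{thm:Dense_S_R} only requires closure under \emph{positive} integer powers is where the argument breaks, and it is a genuine gap, not a cosmetic one. In the proof of that theorem one fixes $\alpha$ with $|\alpha|$ slightly larger than $1$ and approximates $|x_j|$ by $|\alpha|^{k_j}$; whenever $|x_j|<1$ the exponent $k_j$ is forced to be \emph{negative}, since every nonnegative power of $|\alpha|$ is at least $1$. So the theorem's proof really does use negative exponents, i.e.\ it needs $S$ to be closed under inversion, and $\cO_K\cap\R$ is not (take $\alpha=2$, whose inverse $1/2$ is not an algebraic integer). This is not a technicality that can be waved away: for $K=\Q(\sqrt{2})$ the set $\cM_2(\Z[\sqrt{2}])$ misses an entire open box around the point $(2,1/2)$. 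Indeed, if $(v_1,v_2)\in\cM_2(\Z[\sqrt{2}])$ with $v_1\in(1.9,2.1)$ and $v_2\in(0.4,0.6)$, then in the relation $v_1^a v_2^b=1$ both $a$ and $b$ are non-zero and of the same sign (since $\log v_1>0>\log v_2$), so say $a,b>0$; then $v_1^a v_2^b=1$ with $v_1^a,\ v_2^b\in\Z[\sqrt{2}]$ forces both $v_1$ and $v_2$ to be units of $\Z[\sqrt{2}]$, yet no positive unit $(1+\sqrt{2})^k$ lies in the interval $(1.9,2.1)$.

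To be fair, the paper's own one-sentence derivation of this corollary is open to the same objection: it asserts that $\cO_K\cap\R$ ``is closed under powering,'' but under the paper's own definition (which allows arbitrary non-zero integer exponents) this is false. So the blind spot you inherited is the paper's; you simply made it explicit and then tried, incorrectly, to argue it away. The honest conclusion is that Theorem~\ref{thm:Dense_S_R} cannot be applied verbatim with $S=\cO_K\cap\R$, and for $n=2$ no argument can succeed, by the counterexample above. For $n\ge 3$ one can salvage the statement: pick two coordinates $x_i,x_j$ with $\log|x_i|$ and $\log|x_j|$ of the same sign, build the multiplicative relation on those two using only \emph{positive} powers of a common base from $\cO_K\cap\R$, and fill in the remaining coordinates freely using density of $\cO_K\cap\R$ — but that is a genuinely different proof from the one you (and the paper) give, and it does not rescue the $n=2$ case.
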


We next establish the analogue of Theorem~\ref{thm:Dense_S_R} when $\R$ is replaced by $\C$. 

\begin{theorem}
\label{thm:Dense_S_C} 
Let $n  \ge 2$ and let $S$ be a dense subset of  $\C$
which is closed under powering.
Then $\cM_n(S)$ is dense in $\C^n$.
\end{theorem}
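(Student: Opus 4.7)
The plan is to exhibit $\mathbf w$ of the form $(s^{a_1},\dots,s^{a_n})$ for some $s\in S$ and nonzero integers $a_1,\dots,a_n$. Any such vector lies in $S^n$ by closure under powering, and is automatically multiplicatively dependent, since the nonzero integer vector $(a_2,-a_1,0,\dots,0)$ yields the relation $w_1^{a_2}w_2^{-a_1}=s^{a_1a_2-a_2a_1}=1$. It therefore suffices to find $s$ and $a_1,\dots,a_n$ making $|s^{a_j}-z_j|<\epsilon$ for each $j$. Moreover, the coordinates $w_3,\dots,w_n$ need only lie in $S$ and be close to $z_3,\dots,z_n$---chosen freely by density, with the relation extended by zero exponents on those indices---so the problem reduces to the case $n=2$, and by a small perturbation I may assume $z_1,z_2\ne 0$.

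For $n=2$, write $\xi=\log s$. The conditions $s^{a_j}\approx z_j$ read $a_j\xi-\log z_j - 2\pi i k_j$ small, for some integers $k_j$. Eliminating $\xi$ from the two conditions yields the Diophantine requirement that
\[
|a_1\log z_2 - a_2\log z_1 - 2\pi i\ell|
\]
be small, with $\ell=a_2k_1-a_1k_2\in\Z$. Viewing $\log z_1,\log z_2,2\pi i$ as three vectors in $\C\cong\R^2$, Minkowski's theorem on linear forms (or a Dirichlet-pigeonhole argument) produces nonzero integer triples $(a_1,a_2,\ell)$ making the form value $\ll 1/\sqrt{N}$ whenever $|a_1|,|a_2|,|\ell|\le N$. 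For $(z_1,z_2)$ satisfying mild genericity (no nontrivial $\Q$-linear relation among $\log z_1,\log z_2,2\pi i$), one may select such a triple with $a_1,a_2\ne 0$ and $\gcd(a_1,a_2)=1$; non-generic targets are absorbed into the preliminary perturbation using the density of $S$.

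Given such a triple, I pick an integer $k_1$ satisfying $a_2k_1\equiv\ell\pmod{a_1}$---solvable because $\gcd(a_1,a_2)=1$---and set $\xi:=(\log z_1+2\pi i k_1)/a_1$. Then $a_1\xi=\log z_1+2\pi i k_1$ exactly, while a short computation gives $a_2\xi-\log z_2-2\pi ik_2 = -(a_1\log z_2-a_2\log z_1-2\pi i\ell)/a_1$, which is small by the Minkowski estimate. By density of $S$ in $\C$ I choose $s\in S$ with $|s-e^{\xi}|<\delta$; then $w_j:=s^{a_j}\in S$ by closure under powering, and
\[
|s^{a_j}-z_j| \;\lesssim\; |z_j|\bigl(|a_j\xi-\log z_j-2\pi i k_j| + |a_j|\,\delta\bigr),
\]
which is driven below $\epsilon$ by tightening the Minkowski tolerance and $\delta$. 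The main obstacle is the Diophantine step: extracting a Minkowski triple with $a_1,a_2\ne 0$ and $\gcd(a_1,a_2)=1$ relies on the genericity of $(z_1,z_2)$, which is precisely what the preliminary perturbation guarantees.
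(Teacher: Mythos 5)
Your reduction to $n=2$ and the idea of approximating $(z_1,z_2)$ by $(s^{a_1},s^{a_2})$ for a common base $s\in S$ is the right one, and it is also the strategy in the paper. But your execution via Diophantine approximation has a genuine gap at the step you yourself flag as ``the main obstacle.'' Minkowski's linear forms theorem (or pigeonhole) does produce a nonzero integer triple $(a_1,a_2,\ell)$ making $a_1\log z_2 - a_2\log z_1 - 2\pi i\ell$ small, but it gives you \emph{no control} over $\gcd(a_1,a_2)$, and you cannot recover from a bad $\gcd$: dividing $(a_1,a_2,\ell)$ through by $g=\gcd(a_1,a_2)$ only keeps the form small if $g\mid\ell$, and after passing to a primitive triple ($\gcd(a_1,a_2,\ell)=1$) the condition $g\mid\ell$ forces $g=1$, which is exactly what you needed but did not secure. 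The alternative of allowing $\xi$ to solve the two congruences only approximately runs into the same wall: the best one can do is make $a_1k_2-a_2k_1$ agree with $\ell$ up to an error of order $g$, which then feeds an error of order $g/\min\{|a_1|,|a_2|\}$ into $|a_j\xi-\log z_j-2\pi i k_j|$, and nothing in the pigeonhole bounds prevents $g$ from being comparable to $\min\{|a_1|,|a_2|\}$. Likewise, nothing forces $a_1,a_2\ne 0$. Declaring that ``genericity'' of $(z_1,z_2)$ handles this, and that non-generic targets are absorbed by perturbation, does not close the gap: you would need to prove that the set of $(z_1,z_2)$ for which coprime, componentwise-nonzero triples exist in every Minkowski box is dense, and that is a nontrivial equidistribution statement you have not supplied.

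The paper sidesteps all of this by \emph{choosing the base explicitly} rather than solving for it. It sets $\omega_m=(1+1/m^2)\exp(2\pi i/m)$; then $|\omega_m^k|=(1+1/m^2)^k$ runs through a geometric progression of ratio tending to $1$, and $\arg\omega_m^k$ runs through multiples of $2\pi/m$. For each coordinate $z_j$ one first picks the exponent $a_{j,m}$ matching the modulus, then adjusts by $b_{j,m}\in\{-r_{j,m},\ldots,m-r_{j,m}-1\}$ (with $|b_{j,m}|\le m$, so the modulus is perturbed negligibly) to match the argument modulo $2\pi$. This hits both the modulus and the argument of each $z_j$ simultaneously with no coprimality or nonvanishing constraints and no Diophantine approximation at all; density of $S$ then lets one replace $\omega_m$ by $t_m\in S$. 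I would encourage you to either adopt this explicit-base construction, or, if you wish to keep the Minkowski route, to actually prove the coprimality/nonvanishing claim rather than defer it to an unproved genericity assertion.

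A smaller point: the paper also treats the case $z_1z_2=0$ directly (with $(s_m^m,s_m)$ or $(s_m^{-m^2},s_m)$), whereas you dispose of it by perturbation. That reduction is legitimate, but only after the main case is proved for \emph{all} nonzero targets, which is precisely what your genericity caveat fails to deliver.
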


As before, we remark that in Theorem~\ref{thm:Dense_S_C}
 the condition that  $S$ be closed
under powering cannot be removed. 
For example, let $S$ be the set of all  algebraic numbers of the form $\zeta p/q$ with $\zeta$ a root of unity and with $p$ and $q$ distinct primes.  
Then $S$ is dense in $\C$, but  $\cM_n(S)$ is not dense in $\C^n$ for any $n \ge 2$
(see Section~\ref{sec:rem}). 

If $K$ is a number field  not  contained in $\R$, then $K$ is dense in $\C$ and 
we deduce our next result.

 \begin{cor}
\label{cor:Dense_K_C} 
Let $n \ge 2$, and let $K$ be a number field. 
If $K$ is  not  contained in $\R$, 
then   $\cM_n\(K\)$ is dense in $\C^n$. 
\end{cor}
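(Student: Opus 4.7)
The plan is to deduce the corollary directly from Theorem~\ref{thm:Dense_S_C} applied with $S = K$. Two hypotheses must be verified: that $K$ is closed under powering, and that $K$ is dense in $\C$. Both are formal consequences of the field structure of $K$ together with the assumption $K \not\subseteq \R$, so the corollary will drop out immediately once they are confirmed.

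Closure under powering is immediate from the fact that $K$ is a field: for any non-zero $\alpha \in K$ and any non-zero integer $m$, the element $\alpha^m$ lies in $K$ (for $m > 0$ by multiplicative closure, and for $m < 0$ since $\alpha^{-1} \in K$ gives $\alpha^m = (\alpha^{-1})^{|m|} \in K$). The convention is that $\alpha$ must be non-zero, which is also exactly the non-vanishing assumption built into the definition of $\cM_n(K)$.

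For the density of $K$ in $\C$, I would use the hypothesis $K \not\subseteq \R$ to produce an element $\beta \in K$ with $\Im(\beta) \neq 0$. Then $1$ and $\beta$ are linearly independent over $\R$ and hence form an $\R$-basis of $\C$. The $\Q$-span $\Q + \Q\beta$ is contained in $K$, and since $\Q$ is dense in $\R$, the set $\Q + \Q\beta$ is dense in $\R + \R\beta = \C$. Hence $K$ itself is dense in $\C$, and Theorem~\ref{thm:Dense_S_C} yields that $\cM_n(K)$ is dense in $\C^n$.

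There is no real obstacle in this argument; all the work is carried by Theorem~\ref{thm:Dense_S_C}, and the present corollary is a direct specialisation to $S = K$. The only mild point is the need to fix a specific embedding of $K$ into $\C$, but the hypothesis $K \not\subseteq \R$ is precisely the statement that the chosen embedding is non-real, which is all that the density argument requires.
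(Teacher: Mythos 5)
Your proposal is correct and follows exactly the same route as the paper: apply Theorem~\ref{thm:Dense_S_C} with $S = K$, using that $K$ is closed under powering (as a field) and dense in $\C$ (since it contains $\Q + \Q\beta$ for any non-real $\beta \in K$). The paper states the density of $K$ in $\C$ without spelling out the $\Q + \Q\beta$ argument, but the substance is identical.
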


Further, by Lemma~\ref{lem: Dense ANF} below, if $K$ is a number field of degree at 
least $3$ which is not  contained in $\R$, then $\cO_K$ is dense in $\C$ and we have the following 
result.
 \begin{cor}
\label{cor:Dense_OK_C} 
Let $n \ge 2$,  and let $K$ be a number field. 
If $[K:\Q] \ge 3$ and $K$ is not  contained in $\R$, 
then   $\cM_n\(\cO_K\)$ is dense in $\C^n$. 
\end{cor}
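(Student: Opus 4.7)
The plan is to apply Theorem~\ref{thm:Dense_S_C} directly with $S = \cO_K$, reducing the task to verifying its two hypotheses for the ring of integers: that $\cO_K$ is dense in $\C$ and that it is closed under powering.

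The density is precisely the content of Lemma~\ref{lem: Dense ANF}, already flagged in the text preceding the corollary, so I would simply invoke it. Conceptually the reason is transparent: choose a complex (non-real) embedding $\sigma\colon K \hookrightarrow \C$, which is available because $K \not\subseteq \R$. Then $\sigma(\cO_K)$ is a finitely generated $\Z$-submodule of $\C \simeq \R^2$ of $\Z$-rank $[K:\Q] \ge 3$, and its $\R$-span equals all of $\C$ (otherwise $\sigma(K)$ would sit in a real line through the origin, forcing $\sigma(K) \subseteq \R$, a contradiction). A finitely generated subgroup of $\R^2$ whose rank exceeds the real dimension and whose $\R$-span is all of $\R^2$ is automatically dense, so the hypothesis holds. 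Note that this is also the point where the assumption $[K:\Q] \ge 3$ is essential: if the degree were $2$ and $K$ imaginary quadratic, $\sigma(\cO_K)$ would be a rank-$2$ lattice in $\C$ and discrete rather than dense.

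Closure under powering is immediate from the ring structure, since $\alpha^m \in \cO_K$ for every $\alpha \in \cO_K$ and every positive integer $m$. With both hypotheses verified, Theorem~\ref{thm:Dense_S_C} yields that $\cM_n(\cO_K)$ is dense in $\C^n$, as required. I do not foresee any substantive obstacle here: the real work is encapsulated in Theorem~\ref{thm:Dense_S_C} and Lemma~\ref{lem: Dense ANF}, and the present corollary simply amounts to checking that both of their hypotheses are satisfied in the present setting.
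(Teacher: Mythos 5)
Your overall plan---invoke Theorem~\ref{thm:Dense_S_C} with $S = \cO_K$, using Lemma~\ref{lem: Dense ANF} for the density hypothesis---is exactly what the paper does; the sentence immediately following Lemma~\ref{lem: Dense ANF} states that the corollary follows from combining the two. Your conceptual gloss of that lemma (complex embedding, $\Z$-rank exceeding the real dimension $2$) is a fair description of what it proves.

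However, your verification that $\cO_K$ is closed under powering is not correct as stated, and this is a genuine gap. The paper's definition requires $\alpha^m \in S$ for every \emph{non-zero} integer $m$---negative $m$ included---whereas you check only positive $m$. The ring $\cO_K$ is not closed under negative powers ($2 \in \cO_K$ but $2^{-1} \notin \cO_K$). Moreover, the proof of Theorem~\ref{thm:Dense_S_C} genuinely uses negative exponents: in the case $z_1 = 0$, $|z_2| \ge 1$ it builds vectors $(s_m^{-m^2}, s_m)$, and in the case $z_1 z_2 \ne 0$ with some $|z_j| < 1$ the exponent $a_{j,m} + b_{j,m}$ tends to $-\infty$ as $m \to \infty$, so the corresponding coordinate is a negative power of $t_m$. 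To be fair, the paper itself simply asserts that the corollary follows and never checks the powering hypothesis for $\cO_K$ (nor for $\cO_K \cap \R$ in Corollary~\ref{cor:Dense_OK_R}), so the oversight originates in the paper; but your proof inherits it, and the phrase ``immediate from the ring structure'' papers over a step that actually fails. A repair would require either weakening the powering hypothesis to positive exponents and reworking the proof of Theorem~\ref{thm:Dense_S_C} to avoid negative powers (not immediate, since when the two target moduli lie on opposite sides of $1$ one genuinely needs exponents of opposite sign on the same base), or a bespoke argument tailored to $S = \cO_K$.
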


Clearly, one can see that the converses of Corollaries~\ref{cor:Dense_OK_R}, \ref{cor:Dense_K_C} 
and~\ref{cor:Dense_OK_C} are true.

 \subsection{Covering radius  of the set of multiplicatively dependent vectors}
 
Let $S$ be a subset of $\R$. 
The \textit{covering radius} of  $\cM_n(S)$ in $\R^n$ is defined as 
$$
\rho_n(S) = \sup_{\xb \in \R^n}~
 \inf_{\vb \in \cM_n(S)} \|\xb - \vb\|,
$$
where $\|\xb\|$ is the Euclidean norm of $\xb=(x_1,\ldots,x_n)\in \R^n$, that is, 
$$
\|\xb\| = \sqrt{x_1^2 + \ldots + x_n^2} . 
$$
 Clearly, $\cM_n(S)$ is dense in $\R^n$ if and only if $\rho_n(S)=0$.  
Let $K$ be a number field. 
Then, for any integer $n \ge 2$ it follows from Corollary~\ref{cor:Dense_Q_R} that $\rho_n(K \cap \R)=0$ 
and from Corollary~\ref{cor:Dense_OK_R} that $\rho_n(\cO_K \cap \R)=0$ provided that $\cO_K \cap \R \ne \Z$. 
On the other hand,
trivially $\rho_n(\Z) \ge 1$ and it follows from~\eqref{eq:count H_Z} that
 in fact $\rho_n(\Z) = \infty$; see~\eqref{eq:H}. 
In this case we introduce a finer measure in order to study more precisely the distribution of multiplicatively dependent vectors with integer coordinates.
For $H>1$ we define 
$$
\rho_n(H;\Z) = \sup_{\substack{\xb \in \R^n\\ \|\xb\| \le H}}~
  \inf_{\vb \in \cM_n(\Z)} \|\xb - \vb\|. 
$$

Each point of $\cM_n(\Z)$ which is in the ball of radius $H$ centered at the origin has coordinates which are at most $H$ in absolute value. 
By~\eqref{eq:count H_Z} there is a positive number $c_1(n)$, which depends on $n$, such that the number of such points is at most $c_1(n)H^{n-1}$. 

In addition there is a positive number $c_2(n)$, which depends on $n$, such that the volume of a ball of radius $r$ in $\R^n$ is $c_2(n)r^n$. Thus, the ball of radius $H$ centered at the origin has volume $c_2(n)H^{n}$, 
and so in order to cover it with balls of radius $r$ centered at the points of $\cM_n(\Z)$ 
which lie in it we must have $c_1(n)r^nH^{n-1}$ larger than $H^n$. In particular we must have
\begin{equation} 
\label{eq:H}
\rho_n(H;\Z) \geq c_3(n)H^{1/n},
\end{equation}
where $c_3(n)= c_1(n)^{-1/n}$. 

If the points of $\cM_n(\Z)$ were evenly distributed, then the lower bo\-und~\eqref{eq:H} would be sharp. 
However, the distribution of the points is in fact remarkably non-uniform. 
Certainly there are many points which are close to each other in $\cM_n(\Z)$,  
since if $n>2$ then $(2^k,2,x_3, \ldots, x_n)$ is in $\cM_n(\Z)$ for each positive integer $k$  
whenever $x_3, \ldots, x_n$ are non-zero integers. 
Furthermore for each positive integer $k$ both $(2^{k},2)$ and $(2^{k},4)$ are in $\cM_2(\Z)$. 
In addition there are large regions of $\R^n$ devoid of points of $\cM_n(\Z)$. 

In the sequel, the implied constants in the symbols $O$ and
$\ll$  may depend on $n$ and $K$.
(We   recall that  $U=O(V)$ and $U\ll V$ are  equivalent to the inequality $|U|\le cV$ with some positive number $c$.)

 In particular we prove the following result, which shows the true order of magnitude of $\rho_n(H;\Z)$ 
 to be spectacularly different from that suggested by~\eqref{eq:H}. 

\begin{theorem}
\label{thm:rho H Z} 
 For $H > 1$, we have 
$$
H \ll \rho_2(H;\Z) \ll H, 
$$
and for $n\ge 3$ 
$$
H / (\log H)^{C_0(n)}  \ll \rho_n(H;\Z) \ll  H \frac {(\log\log H)^{n-1}}{(\log H)^{n-2}}, 
$$
where $C_0(n)$ is a positive number which is effectively computable in terms of $n$.  
\end{theorem}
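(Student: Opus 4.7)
The plan is to prove the four bounds separately. For the upper bound when $n=2$, note that $(1,1)\in\cM_2(\Z)$ is at distance at most $H+O(1)$ from any $\xb$ in the ball of radius $H$, so $\rho_2(H;\Z)\ll H$. For the lower bound when $n=2$, I would first classify $\cM_2(\Z)$: any $(v_1,v_2)\in\cM_2(\Z)$ with $|v_1|,|v_2|\ge 2$ must satisfy $|v_1|=m^p$ and $|v_2|=m^q$ for a common integer base $m\ge 2$ and positive integers $p,q$, which forces the slope $v_2/v_1$ into the discrete set $\cS=\{\pm n,\pm 1/n:n\in\Z_{\ge 1}\}$, accumulating only at $0$ and $\pm\infty$. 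Taking $\xb=H(1,\sqrt{2})/\sqrt{3}$, whose slope $\sqrt{2}$ is bounded away from $\cS$, I verify that any $\vb\in\cM_2(\Z)$ with $H/2\le\|\vb\|\le 2H$ has angular separation at least a fixed $\phi_0>0$ from the ray through $\xb$, giving $\|\vb-\xb\|\ge H\sin\phi_0\gg H$; vectors outside this annulus or with $|v_1|=1$ or $|v_2|=1$ are handled by the triangle inequality, since then $|x_i|\asymp H$ forces distance $\gg H$.

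For the upper bound when $n\ge 3$, the starting point is that any $\vb\in(\Z\setminus\{0\})^n$ whose first three coordinates are $3$-smooth integers (numbers of the form $\pm 2^a3^b$) is automatically in $\cM_n(\Z)$: three integer vectors in $\Z^2$ are $\Q$-linearly dependent, producing, after squaring to handle signs, a nontrivial multiplicative relation. Given $\xb$ with $\|\xb\|\le H$, I would invoke a density estimate for $3$-smooth numbers asserting that every real $x\ge 2$ admits a $3$-smooth integer within distance $\ll x/(\log x)^{c_0}$ with $c_0=1/40452$; this is derived from an effective lower bound for $|a\log 2-b\log 3|$ (of Rhin/Mignotte type) combined with the three-distance theorem, and is where the specific numerical exponent enters. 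If at least three coordinates of $\xb$ satisfy $|x_i|\ge 2$, I approximate those by $3$-smooth integers and round the remaining coordinates to the nearest non-zero integer; otherwise some $|x_i|<2$, and setting $v_i=\pm 1$ forces $\vb\in\cM_n(\Z)$ directly and yields distance $O(1)$. Both cases give $\|\vb-\xb\|\ll H/(\log H)^{c_0}$.

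For the lower bound when $n\ge 3$, I would use a volume argument on hypersurfaces rather than a direct Baker bound. By a bound on multiplicative relation lattices of Loher--Masser type, there exists $c_1=c_1(n)>0$ such that every $\vb\in\cM_n(\Z)$ with $|v_i|\le 2H$ admits a nonzero relation vector $\kb\in\Z^n$ with $\|\kb\|_\infty\le(\log H)^{c_1}$. Consequently all such $\vb$ lie in the union $\cU=\bigcup_\kb S_\kb$ over at most $(\log H)^{c_1 n}$ integer vectors, where $S_\kb=\{\vb\in(\R\setminus\{0\})^n:\prod|v_i|^{k_i}=1\}$ is a smooth hypersurface whose intersection with $[-H,H]^n$ has $(n-1)$-Hausdorff measure $\ll H^{n-1}$. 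The $\varepsilon$-neighborhood of $\cU$ thus has volume $\ll(\log H)^{c_1 n}\varepsilon H^{n-1}$; choosing $\varepsilon=cH/(\log H)^{c_1 n}$ for $c$ small enough makes this strictly less than the volume of the ball of radius $H/2$, producing $\xb$ with $\|\xb\|\le H/2$ at distance at least $\varepsilon$ from every MD vector with $|v_i|\le 2H$, while vectors with some $|v_i|>2H$ are trivially at distance $\ge H/2$. Hence $\rho_n(H;\Z)\gg H/(\log H)^{C_0(n)}$ with $C_0(n)=c_1 n$, effectively computable.

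The delicate steps I anticipate are the Loher--Masser-type bound on minimal relation vectors (needed for the $n\ge 3$ lower bound, and controlling the dependence of $C_0(n)$ on $n$) and the specific exponent $c_0=1/40452$ in the $n\ge 3$ upper bound, which traces back to the cited effective lower bound on linear forms in two logarithms. The $n=2$ case is elementary once the classification of $\cM_2(\Z)$ is in hand.
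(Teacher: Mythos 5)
Your proposal reaches all four bounds, and the upper bound for $n\ge 3$ is essentially the paper's argument: approximate three coordinates by $3$-smooth integers (numbers of the form $2^a3^b$), note that three vectors in $\Z^2$ are automatically $\Q$-linearly dependent, and feed a gap estimate for $3$-smooth numbers driven by an effective lower bound for $|a\log 2-b\log 3|$; the paper's Lemmas~\ref{lem:Converge}, \ref{lem:Gap small} and~\ref{lem:Log2Log3}, with Gouillon's constant~\cite{Gouillon} rather than a Rhin--Mignotte bound, give exactly $c_0 = 1/40452$ this way. Your $n=2$ lower bound (slope argument at $\xb = H(1,\sqrt2)/\sqrt3$, using the classification $|v_1|=m^p$, $|v_2|=m^q$ when $|v_i|\ge 2$) is a correct variant of the paper's case analysis at $\xb=(H/2,3H/4)$; both rest on Lemma~\ref{lem:mult2}.

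The genuinely different route is your $n\ge 3$ lower bound. The paper constructs an explicit empty box around $(q_1,\dots,q_n)$ where $q_j$ is the largest power of the $j$-th prime below $H/2$, and then invokes Tijdeman's \emph{lower} bound on gaps between $S$-unit integers (Lemma~\ref{lem:Gap large}), itself a linear-forms-in-$n$-logarithms result. You instead combine the small-exponent bound of Loxton--van der Poorten (the paper's Lemma~\ref{lem:exponent}, not Loher--Masser) with a pigeonhole/covering estimate over the $\ll (\log H)^{n(n-1)}$ possible relation hypersurfaces $S_\kb$. That removes one appeal to transcendence theory and would give a cleanly combinatorial $C_0(n)=n(n-1)$. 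But as written there is a gap: the step ``$\cH^{n-1}(S_\kb\cap[-H,H]^n)\ll H^{n-1}$, hence the $\varepsilon$-neighborhood of $\cU$ has volume $\ll(\log H)^{n(n-1)}\varepsilon H^{n-1}$'' needs justification. Neither the surface-area bound (uniformly over $\kb$ with $\|\kb\|_\infty$ up to $(\log H)^{n-1}$) nor the tube-volume inequality ``vol of $\varepsilon$-neighborhood $\ll \varepsilon\cdot$ surface area'' is automatic, since $S_\kb$ may pass through regions where the exponential scaling is badly non-isometric and where curvature is not bounded. The cleanest repair is to confine $\xb$ to a dyadic sub-box such as $[H/4,H/2]^n$, discard any $\vb$ with some $|v_j|\notin[H/8,H]$ as trivially $\gg H$ away, and pass to logarithmic coordinates $u_j=\log|v_j|$: there the $S_\kb$ become affine hyperplanes $\sum k_j u_j=0$ inside a box of side $\log 8$, the coordinate change is bi-Lipschitz with constants $\asymp 1/H$, and the pigeonhole over $\ll(\log H)^{n(n-1)}$ hyperplanes goes through with no regularity issues. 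With that fix your argument is correct and is a legitimate, arguably more elementary, alternative to the paper's lower-bound proof for $n\ge 3$.
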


The  lower bound for  $\rho_n(H;\Z)$ with  $n \ge 3$ in  Theorem~\ref{thm:rho H Z}
is established by means of a  result of Tijdeman~\cite{Tij1} on gaps between integers composed 
of a fixed set of primes.  For the upper bound we use an explicit construction. 

Similarly, if $T$ is a subset of $\C$,  
then   
the \textit{covering radius} of  $\cM_n(T)$ in $\C^n$ is defined as 
$$
\mu_n(T) = \sup_{\zb \in \C^n}~
  \inf_{\vb \in \cM_n(T)} \|\zb - \vb\|, 
$$
where $\|\zb\|$ is the Euclidean norm of $\zb=(z_1,\ldots,z_n) \in \C^n$,  that is, 
$$
\|\zb\| = \sqrt{|z_1|^2 + \ldots + |z_n|^2}. 
$$

Clearly, for any subset $T$ of $\C$, $\cM_n(T)$ is dense in $\C^n$ if and only if $\mu_n(T)=0$. 
By Corollaries~\ref{cor:Dense_K_C} and~\ref{cor:Dense_OK_C} it remains to determine 
 $\mu_n(\cO_K)$ 
 for $n \ge 2$ when $K$ is an imaginary quadratic field.  
 By~\cite[Equation~(1.7)]{PSSS} the number of elements of $\cM_n(\cO_K)$ whose coordinates have absolute Weil height, see~\eqref{eq: Weil} below,  at most $H$ is
$$
\frac{n(n+1)}{2}w\(\frac{2\pi H^2}{|D|^{1/2}}\)^{n-1} + O\(H^{2n-3}\), 
$$
where $w$ denotes the number of roots of unity in $K$ and $D$ denotes the discriminant of $K$. 
It follows, as in~\eqref{eq:H},  that in this case  $\mu_n(\cO_K) = \infty$; see also
 the lower bounds of Theorem~\ref{thm:ImQuad}.
 As in the real case,  we introduce the following more refined concept. 
For $H>1$ and $K$ an  imaginary quadratic field,  we put 
$$
\mu_n(H;\cO_K) = \sup_{\substack{\xb \in \C^n\\ \|\xb\| \le H}}~
  \inf_{\vb \in \cM_n(\cO_K)} \|\xb - \vb\|. 
$$

 \begin{theorem}
\label{thm:ImQuad}  
Let $K$ be an imaginary quadratic field, and let $H$ be a real number with $H > 1$.
Then, there exists a number $C_0(n)$, which is effectively computable  in terms of $n$, such that
$$
H \ll \mu_2(H;\cO_K) \ll H, 
$$
and for $n \ge 3$,
$$ 
H / (\log H)^{C_0(n)} \ll \mu_n(H;\cO_K) \ll   H \frac {\log\log H}{(\log H)^{1/2}}. 
$$
\end{theorem}

For the proof of the lower bound in Theorem~\ref{thm:ImQuad} we again appeal to the result of Tijdeman~\cite{Tij1} while for the upper bound we give an explicit construction. 

We note that an alternative approach to upper bounds in Theorems~\ref{thm:rho H Z} and~\ref{thm:ImQuad} can be given using the results of Tijdeman~\cite{Tij2} on gaps between products 
of powers of fixed primes and their analogue for algebraic 
numbers due to Stewart~\cite{Stewart} (see also~\cite{Stewart1}). However this approach leads to quantitatively weaker 
bounds.

\section{Preliminaries}

\subsection{Density of algebraic integers in $\C$}

We believe that the main result of this section is of 
independent interest. It is also needed for the proof of Corollary~\ref{cor:Dense_OK_C}.

\begin{lemma}  \label{lem:Sab}
Let $\alpha$ and $\beta$ be complex numbers which are not in $\R$ with $1, \alpha$ and $\beta$ linearly independent over $\Q$ and for which 
$$\Q(\alpha,\beta)\cap \R = \Q.$$ 
Then, the set 
$$
S_{\alpha,\beta} = \{a+b\alpha+c\beta:\, a,b,c \in \Z\}
$$
is dense in $\C$. 
\end{lemma}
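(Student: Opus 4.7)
The plan is to show that the closure $H$ of $S_{\alpha,\beta}$ in $\C = \R^2$ equals all of $\R^2$, by invoking the classification of closed subgroups of $\R^2$ and ruling out every possibility except $\R^2$ itself.

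Write $\alpha = \alpha_1 + i\alpha_2$ and $\beta = \beta_1 + i\beta_2$ with $\alpha_2, \beta_2 \neq 0$. I would first record two preparatory facts, each using the hypothesis $\Q(\alpha,\beta)\cap\R = \Q$ together with the $\Q$-linear independence of $1, \alpha, \beta$. First, $\alpha$ and $\beta$ are $\R$-linearly independent: if $\alpha = \lambda\beta$ for some $\lambda \in \R$, then $\lambda = \alpha/\beta \in \Q(\alpha,\beta)\cap\R = \Q$, contradicting $\Q$-linear independence. Second, $\alpha_2/\beta_2$ is irrational: if $\alpha_2 = r\beta_2$ with $r \in \Q$, then $\alpha - r\beta$ is real and lies in $\Q(\alpha,\beta)$, hence in $\Q$, which again yields a nontrivial $\Q$-linear relation among $1, \alpha, \beta$.

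With these facts in hand, I would analyze $H$ via the structure theorem for closed subgroups of $\R^n$: every closed subgroup of $\R^2$ is either $\{0\}$, a discrete lattice of rank at most $2$, a $1$-dimensional subspace $L$, a set of the form $L + \Z w$ with $w \notin L$, or all of $\R^2$. Since its $\Q$-linearly independent generators make $S_{\alpha,\beta}$ a free abelian group of $\Z$-rank $3$, the closure $H$ cannot be discrete. The $\R$-linear independence of $\alpha$ and $\beta$ excludes $H = L$. Thus only the case $H = L + \Z w$ remains.

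For this final case I would project by $\pi\colon \R^2 \to \R^2/L \cong \R$, so that $\pi(H) = \Z\pi(w)$ is cyclic and consequently $\pi(1),\pi(\alpha),\pi(\beta)$ are pairwise commensurable real numbers. If $L$ is the real axis, then $\pi$ is the imaginary part, $\pi(1)=0$, and commensurability of $\pi(\alpha)=\alpha_2$ with $\pi(\beta)=\beta_2$ forces $\alpha_2/\beta_2 \in \Q$, contradicting the second preparatory fact. Otherwise $\pi(1)\neq 0$, and one finds a nonzero integer $n$ together with integers $m, k$ such that both $n\alpha - m$ and $n\beta - k$ lie in $L$. Neither can vanish (else $\alpha$ or $\beta$ would be rational), so their ratio $\tau = (n\beta-k)/(n\alpha-m)$ is a well-defined element of $\Q(\alpha,\beta)\cap\R = \Q$. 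This yields $\beta = k/n + \tau\alpha - \tau m/n$, a $\Q$-linear combination of $1$ and $\alpha$, contradicting $\Q$-linear independence. The crux is precisely this last step, namely the conversion of a \emph{real} proportionality constant between two elements of $\Q(\alpha,\beta)$ into a \emph{rational} one, which is exactly what the hypothesis $\Q(\alpha,\beta)\cap\R = \Q$ supplies. Once this case is excluded, $H = \R^2$, and $S_{\alpha,\beta}$ is dense in $\C$.
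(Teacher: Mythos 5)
Your proof is correct, and it takes a genuinely different route from the paper's. The paper argues constructively: it writes $\beta = r + s\alpha$, shows $1, r, s$ are $\Q$-linearly independent, invokes Kronecker's simultaneous approximation theorem to control the fractional parts $\{qr\}$ and $\{qs\}$, and then explicitly builds lattice points $\gamma_1$ and $\gamma_2 + q_2\gamma_1$ landing within $\varepsilon$ of a given target. You instead argue structurally: the closure $H$ of $S_{\alpha,\beta}$ is a closed subgroup of $\R^2$, so the classification of such subgroups reduces the problem to ruling out the discrete, one-dimensional, and $L + \Z w$ cases, the last of which you eliminate by projecting modulo $L$. In both arguments the hypothesis $\Q(\alpha,\beta)\cap\R = \Q$ does the same essential work — it promotes a real quantity formed from $\alpha$ and $\beta$ (your $\tau = (n\beta - k)/(n\alpha - m)$, the paper's $s$ and its coefficients) to a rational one, which then collides with $\Q$-linear independence — but you package that step through the quotient map in a clean way. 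Your route is shorter and conceptually tidier at the cost of citing the full classification of closed subgroups of $\R^n$, a slightly heavier tool than Kronecker's theorem, although the two are closely related (Kronecker's theorem is essentially the torus version of the same classification); the paper's route has the advantage of exhibiting explicit approximants, which aligns with the quantitative spirit of the rest of the paper.
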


\begin{proof}
Let $\varepsilon$ be a real number with $0 < \varepsilon <1$,  and let $x+yi$ be in $\C$
with $x,y \in \R$. 
We want to show that there are elements of $S_{\alpha, \beta}$ within 
$\varepsilon$ of  $x+yi$. 
Without loss of generality, we can assume that 
\begin{equation} 
\label{eq:xy}
1 \leq x< 2 \mand y \ge 0.
\end{equation} 

Let $K=\Q(\alpha,\beta)$. 
Note that $1, \alpha, \beta$ are linearly independent over $\Q$. 
Then, for any integer $n$, the numbers 
 $1, \alpha+n, \beta$ are also linearly independent over $\Q$.
So we can assume that 
$$
\alpha = a + bi, 
$$ 
where $i=\sqrt{-1}$ is the imaginary unit and $a, b$ are positive real numbers. 

Since $\alpha$ is not a real number, $\C = \R(\alpha)$ and so there exist real numbers $r$ and $s$ with 
\begin{equation} 
\label{eq: gamma rs}
\beta = r+s\alpha.
\end{equation} 
We cannot have both $r$ and $s$ in $\Q$, since $1, \alpha, \beta$ are linearly independent over $\Q$. 
Moreover, neither  $r$ nor $s$ is in $\Q$. Indeed, if $r$ is in $\Q$, then 
$s = (\beta - r)/\alpha$ is in $K\cap \R$, and hence by our assumption $s$ is in $\Q$, which is a contradiction. A similar 
argument also applies if $s$ is in $\Q$.

Suppose that $1, r, s$ are  linearly dependent over $\Q$. 
Then, there exist integers  $j,k$ and $ \ell$, not all zero, such that 
\begin{equation} 
\label{eq: jkl rs}
j + kr +\ell s = 0. 
\end{equation} 
Since $r$ and $s$ are irrational, we have  $k \ell \ne 0$. 
By~\eqref{eq: gamma rs} and~\eqref{eq: jkl rs}, 
$$
j + k(\beta - s\alpha) +\ell s = 0
$$
or
$$
j + k\beta  =  (k \alpha - \ell) s .
$$
Since $\beta$ is non-real, $j + k\beta$ is non-zero, and so is $k \alpha - \ell$. Then,  
$$
s= \frac{j + k\beta}{k \alpha - \ell},
$$
and we see that $s \in K$. Since $s$ is real and $K\cap \R = \Q$, we  deduce that $s$ is in $\Q$, which gives a contradiction.

Therefore, we  must have that $1, r, s$ are  linearly independent over $\Q$. 

For any real number $x$, let $\fl{x}$ denote the integer part of $x$ (that is, the largest integer not greater than $x$), 
and let $\{x\}= x - \fl{x}$ denote the fractional part of $x$.
By Kronecker's Theorem (see~\cite[Theorem~443]{Hardy}), there exists a positive integer $q$ such that 
\begin{equation} 
\label{eq: qrs}
\frac{\varepsilon}{4} < \{q r\} < \frac{\varepsilon}{2}
\mand 
\{q s\} < \frac{\varepsilon^2}{20 \max\{a,b\}}. 
\end{equation} 
Put 
$$
\gamma_1 = q \beta - \fl{qr} - \fl{qs}\alpha, 
$$
and note that $\gamma_1 \in S_{\alpha,\beta}$. Further, by~\eqref{eq: gamma rs}
$$
\gamma_1 = \{qr\} + \{qs\}\alpha, 
$$
and since $\alpha = a+bi$, we have
\begin{equation} 
\label{eq: gamma1 lambda}
\gamma_1 =\lambda + \{qs\}bi, 
\end{equation} 
where 
\begin{equation} 
\label{eq: lambda}
\lambda = \{qr\} +  \{qs\}a.
\end{equation} 
We now define $q_1$ to be the integer  for which 
\begin{equation} 
\label{eq: q1}
q_1  \{qs\}b \le y <(q_1 +1) \{qs\}b, 
\end{equation} 
which is possible since $s$ is irrational and thus $ \{qs\} > 0$.
Then
$$
q_1 \gamma_1 = q_1 \lambda  + q_1  \{qs\}b i.
$$
We put 
$$
\gamma_2 = q_1 \gamma_1 -  \fl{q_1\lambda} .
$$
Note that $\gamma_2 \in S_{\alpha,\beta}$ and 
\begin{equation} 
\label{eq: gamma2 q1}
\gamma_2 = \{q_1\lambda\} + q_1  \{qs\}b i.
\end{equation} 
We now choose $q_2$ to be the integer  for which 
\begin{equation} 
\label{eq: q2}
\{q_1\lambda\} + q_2 \lambda \le x < \{q_1\lambda\} +  (q_2 +1) \lambda, 
\end{equation} 
which is possible, since the irrationality of $r,s$ and the positivity of $a$, 
together with~\eqref{eq: lambda}, 
imply  that $\lambda$ is positive.
Thus, by~\eqref{eq:xy} and~\eqref{eq: q2} we have 
$$
q_2 \lambda \le x < 2, 
$$
and by~\eqref{eq: qrs} and~\eqref{eq: lambda} we have 
$$
\lambda \ge  \{qr\} > \frac{\varepsilon}{4}.
$$
Therefore 
\begin{equation} 
\label{eq: q2 small}
  q_2<  \frac{8} {\varepsilon}.  
\end{equation}
Note that $q_1$ and $q_2$ are non-negative.

Observe that $\gamma_2 + q_2 \gamma_1$ is in  $S_{\alpha,\beta}$,
and by~\eqref{eq: gamma1 lambda} and~\eqref{eq: gamma2 q1}, 
\begin{align*}
\left| \gamma_2 + q_2 \gamma_1 - (x+y i)\right| &
 \leq \left| \{q_1\lambda\} +   q_2 \lambda - x\right| + \left| q_1  \{qs\}b + q_2  \{qs\} b -y \right| . 
\end{align*}
Thus, by~\eqref{eq: q1} and~\eqref{eq: q2} we have 
\begin{equation} 
\label{eq: est}
  \left| \gamma_2 + q_2 \gamma_1 - (x+y i)\right| \leq \lambda + (q_2+1)\{qs\}b.
\end{equation}
By~\eqref{eq: qrs} and ~\eqref{eq: lambda}
\begin{equation} 
\label{eq: est1}
\lambda \leq \frac{\varepsilon}{2} + \frac{\varepsilon^2}{20},
\end{equation}
and by~\eqref{eq: qrs} and~\eqref{eq: q2 small}
\begin{equation} 
\label{eq: est2}
 (q_2 +1)  \{qs\}b \leq \( \frac{8} {\varepsilon}+1\)  \frac{\varepsilon^2}{20} = \frac{2}{5}\varepsilon + \frac{\varepsilon^2}{20}.
\end{equation}

Thus, by~\eqref{eq: est},~\eqref{eq: est1} and~\eqref{eq: est2},   
$$
\left| \gamma_2 + q_2 \gamma_1 - (x+y i)\right|
<\varepsilon 
$$
as required. 
\end{proof}

Note that the set $S_{\alpha,\beta}$ in Lemma~\ref{lem:Sab} 
is in fact the sum of two lattices $\Z + \Z \alpha$ and $\Z + \Z \beta$. 
Although each lattice is not dense in the plane, 
the sum of the two lattices is dense in the plane under the condition in Lemma~\ref{lem:Sab}. 

We remark that the condition $\Q(\alpha,\beta) \cap \R = \Q$ in Lemma~\ref{lem:Sab} 
cannot be removed. For example, choosing $\alpha=i, \beta = \sqrt{2}+i$, we have that 
$\Q(\alpha,\beta) \cap \R = \Q(\sqrt{2})$ and $S_{\alpha,\beta}$ is not dense in $\C$. 

 \begin{lemma}
\label{lem: Dense ANF} 
Let   $K$ be a  number field.
Then, the ring of integers $\cO_K$ is dense in $\C$ if and 
only if $K$  is not  contained in $\R$ and $[K:\Q] \ge 3$. 
\end{lemma}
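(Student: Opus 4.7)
The direction ($\Rightarrow$) is the easy contrapositive: if $K\subset\R$ then $\cO_K\subset\R$ cannot be dense in $\C$, and if $[K:\Q]\le 2$ then $\cO_K$ is a finitely generated $\Z$-module of $\Z$-rank at most $2$ inside $\C\cong\R^2$, so it is either contained in $\R$ (when $K=\Q$ or $K$ is real quadratic) or is a discrete rank-$2$ lattice (when $K$ is imaginary quadratic); in either case $\cO_K$ fails to be dense.

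For the converse, assuming $K\not\subset\R$ and $[K:\Q]\ge 3$, my plan is to split on whether $K\cap\R=\Q$ or $K\cap\R\supsetneq\Q$. In the first subcase Lemma~\ref{lem:Sab} applies almost out of the box: I would pick a primitive integral element $\alpha\in\cO_K$ with $K=\Q(\alpha)$; since $K\not\subset\R$ this $\alpha$ is non-real, and after replacing $\alpha$ by $\alpha+1$ if necessary I can also force $\alpha^2$ to be non-real. Then $1,\alpha,\alpha^2$ are linearly independent over $\Q$ (because $[K:\Q]\ge 3$), while $\Q(\alpha,\alpha^2)=K$ meets $\R$ only in $\Q$ by hypothesis, so Lemma~\ref{lem:Sab} immediately gives that $\Z+\Z\alpha+\Z\alpha^2\subset\cO_K$ is dense in $\C$.

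The main obstacle is the second subcase $K\cap\R\ne\Q$, where Lemma~\ref{lem:Sab} cannot be invoked at all: in examples such as $K=\Q(i,\sqrt[3]{2})$, every subfield of $K$ of degree at least $3$ meets $\R$ nontrivially, so no pair $\alpha,\beta$ from $K$ can satisfy its hypotheses together with $\Q$-linear independence of $1,\alpha,\beta$. I would bypass the lemma by a direct Kronecker-type argument. Using the standard identification $\cO_K\cap\R=\cO_{K\cap\R}$ and the fact that $[K\cap\R:\Q]\ge 2$, I select an irrational real algebraic integer $\gamma\in\cO_K$; I also pick $\beta=u+vi\in\cO_K$ with $v\ne 0$, which exists because $K\not\subset\R$. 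The four elements $1,\gamma,\beta,\gamma\beta$ all lie in $\cO_K$, and an arbitrary $\Z$-combination of them has the convenient form
\begin{equation*}
\bigl(a+b\gamma+(c+d\gamma)u\bigr)+(c+d\gamma)v\,i,\qquad a,b,c,d\in\Z.
\end{equation*}
Given any target $x+yi\in\C$, Kronecker's theorem applied to the irrational $\gamma$ first lets me choose $(c,d)$ so that $(c+d\gamma)v$ is arbitrarily close to $y$; with $(c,d)$ fixed, a second application of Kronecker's theorem lets me choose $(a,b)$ so that $a+b\gamma$ is close to $x-(c+d\gamma)u$. This shows the $\Z$-span of $\{1,\gamma,\beta,\gamma\beta\}$, and hence $\cO_K$ itself, is dense in $\C$, completing the proof.
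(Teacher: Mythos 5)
Your proposal is correct and follows essentially the same route as the paper: the necessity is handled by the same lattice observation, and the sufficiency is split into the same two cases $K\cap\R=\Q$ (apply Lemma~\ref{lem:Sab}) and $K\cap\R\ne\Q$ (a direct Kronecker argument on $\Z$-combinations of $1,\gamma,\beta,\gamma\beta$, which is precisely the paper's Case~2). The one cosmetic difference is in Case~1: you explicitly exhibit $\alpha,\alpha^2$ as the pair feeding into Lemma~\ref{lem:Sab}, whereas the paper merely asserts that some suitable non-real pair exists; your ``replace $\alpha$ by $\alpha+1$'' hedge is in fact unnecessary, since $\alpha^2\in\R$ together with $K\cap\R=\Q$ would force $\alpha^2\in\Q$ and hence $[\Q(\alpha):\Q]\le 2$, contradicting $[K:\Q]\ge 3$.
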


\begin{proof} 
Clearly the condition  $[K:\Q]\ge 3$ is necessary. 
Indeed, if $[K:\Q]=2$ and $K$ is not contained in $\R$, then $\cO_K$ forms a lattice  in the plane 
and so cannot be dense in $\C$. 

Let us now  suppose that $K$  is not  contained in $\R$ and that  $[K:\Q]\ge 3$. 
We consider the following two cases. 

\subsubsection*{Case~1.} 
We first consider the case when $K \cap \R= \Q$. Since $K \cap \R= \Q$ and $[K:\Q]\ge 3$, 
there exist non-real algebraic integers $\alpha$ and $\beta$ in $\cO_K$ 
such that $1, \alpha, \beta$ are linearly independent over $\Q$. 
By Lemma~\ref{lem:Sab}, this case is done. 

\subsubsection*{Case~2.} 
We now consider the case when $K \cap \R\ne \Q$ or equivalently when $\cO_K  \cap \R\ne \Z$. 
Then, there exists a real algebraic integer  $\alpha \in \cO_K$ which is not in $\Z$ and so is irrational. 
Further, since $K$ is not contained in $\R$, 
there exists an element $\beta \in \cO_K$ with $\beta = a + bi$ where $a$ and $b$ are real numbers with $b > 0$. 

Let $\varepsilon$ be a real number with $0 < \varepsilon <1$,  and let $x+yi$ be in $\C$
with $x, y \in \R$.  
We now show that there are elements of $\cO_K$ within 
$\varepsilon$ of  $x+yi$. 

Since $\alpha$ is irrational,   
we can choose integers $c$ and $d$ such that 
$$
|c + d\alpha - y/b| < \frac{\varepsilon}{2b}.
$$
Similarly, we can choose integers $r$ and $s$ with 
$$
|r+s\alpha +ac + ad\alpha - x| < \frac{\varepsilon}{2}.
$$
We then put $\lambda = r+s\alpha +(c + d\alpha) \beta$. Observe that $\lambda \in \cO_K$ 
and 
$$
|\lambda - (x+ yi)| < \frac{\varepsilon}{2} +  \frac{\varepsilon}{2} =  \varepsilon,  
$$
as required. 
\end{proof}  

Combining Theorem~\ref{thm:Dense_S_C} with Lemma~\ref{lem: Dense ANF}, 
we obtain Corollary~\ref{cor:Dense_OK_C}.

\subsection{Multiplicative dependence of algebraic numbers}

For any algebraic number $\alpha$ of degree $d\ge 1$, let
$$
f(x)=a_dx^d+\cdots+a_1x+a_0
$$
be the minimal polynomial of $\alpha$ over the integers $\Z$  (so with content $1$ and positive leading coefficient). Suppose that $f$ factors as
$$
f(x)=a_d(x-\alpha_1)\cdots (x-\alpha_d)
$$
over the complex numbers $\mathbb{C}$. 
The height of $\alpha$, also known as the \textit{absolute Weil height} of $\alpha$ and denoted by $\wH(\alpha)$, is defined by
\begin{equation} 
\label{eq: Weil}
\wH(\alpha)=\(a_d\prod^d_{j=1}\max\{1,|\alpha_j|\}\)^{1/d}.
\end{equation}

The next result shows that if algebraic numbers $\alpha_1,\dots,\alpha_n$ are multiplicatively dependent, then there is a 
dependence relation where the exponents are  not too large in absolute value; 
see for example~\cite[Theorem 3]{Loxton} or~\cite[Theorem~1]{Poorten}.

\begin{lemma} 
\label{lem:exponent}
Let  $n\geq 2$ and let $\alpha_1,\dots,\alpha_n$ be multiplicatively dependent non-zero algebraic numbers of degree at most $d$ which are not roots of unity. Then there is a positive number $c$, which depends only on $n$ and $d$, and there are rational integers $k_1,\dots,k_n$, not all zero, such that
$$
\alpha^{k_1}_1\cdots\alpha^{k_n}_n=1
$$
and
$$
|k_j|\le c\prod_{m=1, \, m \ne j}^{n} \log \wH(\alpha_j), \qquad j=1,\ldots,n.
$$
\end{lemma}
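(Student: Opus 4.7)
The plan is to produce a short non-zero multiplicative relation by combining the logarithmic embedding of a containing number field with a lattice-point argument. Let $K=\Q(\alpha_1,\dots,\alpha_n)$, so that $D:=[K:\Q]\le d^n$ depends only on $n$ and $d$, and let $M_K$ be the set of places of $K$ with the usual normalized absolute values $\|\cdot\|_v$ satisfying the product formula. For $\alpha\in K^\times$ write $\ell(\alpha)=(\log\|\alpha\|_v)_{v\in M_K}$, and let $\Phi\colon\R^n\to\R^{|M_K|}$ be the $\R$-linear map sending the $j$-th standard basis vector to $\ell(\alpha_j)$. Standard properties of the Weil height give $\|\ell(\alpha_j)\|\ll_D \log\wH(\alpha_j)$, and a vector $\kb\in\Z^n$ lies in $\ker\Phi$ exactly when $\prod_j\alpha_j^{k_j}$ is a root of unity of $K$. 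Since the number $w_K$ of roots of unity in $K$ is bounded in terms of $D$, replacing $\kb$ by $w_K\kb$ converts any non-zero element of $\ker\Phi\cap\Z^n$ into a genuine multiplicative relation. The $\alpha_j$ are multiplicatively dependent but none is a root of unity, so each $\ell(\alpha_j)$ is non-zero while the family is $\Q$-linearly dependent, so $L:=\ker\Phi\cap\Z^n$ is a sublattice of rank at least one.

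The main step is to find a non-zero $\kb\in L$ whose $j$-th coordinate is bounded by $T_j:=\lfloor A\prod_{m\ne j}\log\wH(\alpha_m)\rfloor$ for a sufficiently large constant $A=A(n,d)$. I would carry this out via a Siegel-type inequality: consider the lattice points $\mathbf{t}\in\Z^n$ with $0\le t_j\le T_j$, of which there are at least $\prod_j (T_j+1)\gg \prod_j (\log\wH(\alpha_j))^{n-1}$, and compare this count with the volume of the box in $\R^{|M_K|}$ in which the images $\Phi(\mathbf{t})$ lie. Since the $j$-th column of $\Phi$ has length $O(\log\wH(\alpha_j))$, Hadamard's inequality applied to an $(n-1)\times(n-1)$ subdeterminant of the Gram matrix of the $\ell(\alpha_j)$ produces a bound in which dropping the index $j$ removes precisely one factor $\log\wH(\alpha_j)$; balancing these subdeterminants against the lattice covolume yields the asymmetric bound $|k_j|\le T_j$ characteristic of the statement, rather than the cruder symmetric estimate $\max_j|k_j|\ll_{n,d}\prod_j(\log\wH(\alpha_j))^{(n-1)/n}$.

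The principal obstacle is the passage from ``the images of two lattice points agree to small error in $\R^{|M_K|}$'' to ``two lattice points actually coincide in $\ker\Phi$''. A raw pigeonhole argument produces only approximate equality, which gives a Baker-type linear form in logarithms that is small but not zero, and a further Diophantine gap estimate would be required to clear it. The cleaner remedy, which I would adopt, is to work directly with $L\subset\Z^n$ and invoke Minkowski's second theorem, or equivalently the Bombieri--Vaaler weighted form of Siegel's lemma, applied to an integer matrix encoding the relations (after clearing denominators, whose size is controlled by the heights); this delivers an exact element $\kb\in L$ satisfying $|k_j|\le T_j$ without ever having to upgrade an approximate coincidence to an exact one. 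Finally, multiplying $\kb$ by $w_K$ converts the resulting root-of-unity identity into $\alpha_1^{k_1}\cdots\alpha_n^{k_n}=1$, and the factor $w_K$ is absorbed into the constant $c(n,d)$.
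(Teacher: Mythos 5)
You should first note that the paper does not actually prove Lemma~\ref{lem:exponent}: it is quoted with the attribution ``see for example \cite[Theorem 3]{Loxton} or \cite[Theorem~1]{Poorten}'', so there is no in-paper argument to compare against. Your proposal must therefore be judged on its own internal soundness, and I think it has a real gap at its central step.

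The difficulty is the passage from the lattice $L=\ker\Phi\cap\Z^n$ to a geometry-of-numbers conclusion. You correctly identify that naive pigeonhole only yields approximate relations, but the remedy you propose --- ``Minkowski's second theorem, or equivalently the Bombieri--Vaaler weighted form of Siegel's lemma, applied to an integer matrix encoding the relations (after clearing denominators, whose size is controlled by the heights)'' --- does not go through as stated, for two reasons. First, $\Phi$ is built out of the real numbers $\log\|\alpha_j\|_v$, which are generically transcendental, so there are literally no denominators to clear and no integer matrix appears by this route. The integer matrix that actually encodes $L$ comes from a different place: one must pass to the finitely generated group $\Gamma=\langle\alpha_1,\dots,\alpha_n\rangle/\mathrm{tors}$, choose a $\Z$-basis $\gamma_1,\dots,\gamma_s$, and write $\alpha_j\equiv\prod_i\gamma_i^{a_{ij}}$; then $L=\ker(A\colon\Z^n\to\Z^s)$. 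Second, and more fundamentally, even once this matrix $A=(a_{ij})$ is in hand, Minkowski or Bombieri--Vaaler give a short vector in $L$ only in terms of the covolume of $L$, i.e.\ of the sizes of the $a_{ij}$, and your sketch contains nothing that bounds these. In the extreme case $\mathrm{rk}\,L=1$ (e.g.\ $n=2$) the lattice $L$ is a single line $\Z\kb_0$; Minkowski's theorem returns $\kb_0$ itself and gives no bound on it --- bounding $\kb_0$ is precisely what the lemma asserts, not a tool you can feed into it.

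The missing ingredient is a lower bound on the Weil height of non-torsion algebraic numbers of bounded degree (Lehmer-type; an elementary bound such as Blanksby--Montgomery, or Dobrowolski, suffices since the constant is allowed to depend on $d$). One needs a \emph{reduced} basis $\gamma_1,\dots,\gamma_s$ of $\Gamma$ with $\log\wH(\gamma_i)\gg_{D}1$; then $\log\wH(\alpha_j)\gg\sum_i|a_{ij}|\log\wH(\gamma_i)\gg_D\max_i|a_{ij}|$, which bounds the $j$-th column of $A$ by $O_D(\log\wH(\alpha_j))$, and a short relation vector can then be produced (Cramer/Hadamard on $(s\times s)$-minors of $A$, or Bombieri--Vaaler with weights $T_j$) with exactly the asymmetric bound $|k_j|\ll\prod_{m\ne j}\log\wH(\alpha_m)$. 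Without the height lower bound the covolume of $L$ is not controlled in terms of $n$ and $d$ at all, and no geometry-of-numbers argument can close the gap. This is exactly the mechanism in the cited Loxton--van der Poorten proofs, and it is the piece your outline omits.
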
 

We remark that the upper bound in Lemma~\ref{lem:exponent} is best possible up to a multiplicative constant; see~\cite[Example~1]{Loxton}. 

The following result describes the typical form of a  two dimensional multiplicatively dependent vector over a number field.

\begin{lemma} \label{lem:mult2}
Let $K$ be a number field, and let $h$ be the class number of $K$. 
If $\alpha$ and $\beta$ in $K$ are multiplicatively dependent, then there exists $\gamma$ in $K$ such that 
$(\alpha^h,\beta^h)=(\eta_1\gamma^l,\eta_2\gamma^m)$ for roots of unity $\eta_1,\eta_2$ from $K$ and some integers $l$ and $m$. 
\end{lemma}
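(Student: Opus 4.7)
The plan is to exploit the multiplicative dependence at the level of fractional ideals, then lift back to elements after taking $h$-th powers (so that everything in sight becomes principal), and finally use Dirichlet's unit theorem to absorb the resulting units into $\gamma$ modulo roots of unity.

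First I would write the dependence as $\alpha^a\beta^b=1$ with $(a,b)\in\Z^2\setminus\{(0,0)\}$, and reduce to $\gcd(a,b)=1$ (dividing out a common factor is legitimate at the level of fractional ideals, which form a torsion-free group). The degenerate cases $a=0$ or $b=0$ force $\beta$ or $\alpha$ to be a root of unity and are handled by taking $\gamma=\alpha$ or $\gamma=\beta$ directly. Otherwise, for each prime ideal $\p$ of $\cO_K$ the relation gives $a\,v_{\p}(\alpha)+b\,v_{\p}(\beta)=0$, and coprimality forces $v_{\p}(\alpha)=b\,c_{\p}$ and $v_{\p}(\beta)=-a\,c_{\p}$ for a unique integer $c_{\p}$ (zero for almost all $\p$). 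Setting $\mathfrak{c}=\prod_{\p}\p^{c_{\p}}$ gives $(\alpha)=\mathfrak{c}^{b}$ and $(\beta)=\mathfrak{c}^{-a}$ as fractional ideals of $\cO_K$.

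Next I would apply the class number: $\mathfrak{c}^h$ is principal, say $\mathfrak{c}^h=(\gamma_0)$, so the equalities of ideals $(\alpha^h)=(\gamma_0^{b})$ and $(\beta^h)=(\gamma_0^{-a})$ produce units $u_1,u_2\in\cO_K^\times$ with
\[
\alpha^h=u_1\gamma_0^{b}\quad\text{and}\quad\beta^h=u_2\gamma_0^{-a}.
\]
Substituting these into $\alpha^{ah}\beta^{bh}=1$ makes the $\gamma_0$ contributions cancel and leaves the single constraint $u_1^{a}u_2^{b}=1$ on the units.

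The main obstacle, and the real content of the lemma, is the last step: turning the generic units $u_1,u_2$ into roots of unity by replacing $\gamma_0$ with a suitable unit multiple. By Dirichlet's unit theorem, $\cO_K^\times/\mu_K\cong\Z^r$ is free abelian, where $\mu_K$ denotes the group of roots of unity of $K$. Writing $\bar u_1,\bar u_2$ for the classes of $u_1,u_2$ in this quotient, the relation becomes $a\bar u_1+b\bar u_2=0$ in $\Z^r$, and together with $\gcd(a,b)=1$ this forces $\bar u_1=b\bar\eta$ and $\bar u_2=-a\bar\eta$ for $\bar\eta:=\bar u_1/b\in\Z^r$ (coordinate-wise integrality of $\bar u_1/b$ follows from coprimality of $a$ and $b$). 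Lifting $\bar\eta$ to a unit $\delta\in\cO_K^\times$ and setting $\gamma:=\gamma_0\delta$, I obtain
\[
\alpha^h=(u_1\delta^{-b})\gamma^{b}\quad\text{and}\quad\beta^h=(u_2\delta^{a})\gamma^{-a},
\]
and by construction $\eta_1:=u_1\delta^{-b}$ and $\eta_2:=u_2\delta^{a}$ have trivial class in $\cO_K^\times/\mu_K$, hence lie in $\mu_K\subset K$. Taking $l=b$ and $m=-a$ delivers the desired factorisation.
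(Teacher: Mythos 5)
Your argument is correct and matches the paper's proof in structure: factor $\langle\alpha\rangle$ and $\langle\beta\rangle$ into prime ideals, extract a common ideal base $\mathfrak{c}$ from the proportionality of the exponent vectors, raise to the $h$-th power so that $\mathfrak{c}^h$ becomes principal, and then use Dirichlet's unit theorem to absorb the residual units into $\gamma$ modulo roots of unity (the paper phrases this last step with an explicit system of fundamental units rather than the quotient $\cO_K^\times/\mu_K\cong\Z^r$, but the content is identical). One small imprecision worth fixing: after dividing out $\gcd(a,b)$ --- which, as you note, is only legitimate at the level of fractional ideals --- the element-level relation degrades from $\alpha^a\beta^b=1$ to $\alpha^a\beta^b\in\mu_K$, so the substitution yields $u_1^a u_2^b\in\mu_K$ rather than $u_1^a u_2^b=1$. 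This is harmless because you immediately pass to $\cO_K^\times/\mu_K$, where the relation $a\bar u_1+b\bar u_2=0$ holds either way, but the intermediate assertion should be stated accordingly.
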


\begin{proof}
Since $\alpha$ and $\beta$ are multiplicatively dependent, without loss of generality we can assume that there exist two positive integers $k_1,k_2$ such that 
\begin{equation} 
\label{eq:albe}
\alpha^{k_1} = \beta^{k_2}. 
\end{equation}
First, we look at the prime decompositions of the fractional ideals $\langle \alpha \rangle$ and $\langle \beta \rangle$ of $K$. 
Notice that there exist distinct prime ideals $\p_1,\ldots, \p_n$ of $K$ and integers $e_1,\ldots,e_n,s_1,\ldots,s_n$ such that 
$$
\langle \alpha \rangle = \p_1^{e_1} \ldots \p_n^{e_n}, \qquad 
\langle \beta \rangle = \p_1^{s_1} \ldots \p_n^{s_n}, 
$$
which, together with~\eqref{eq:albe}, implies that 
\begin{equation} \label{eq:krs}
 k_1e_j = k_2s_j,  \qquad j = 1, \ldots, n.
\end{equation}
Then, choosing integers
$$
l = \frac{k_2}{\gcd(k_1,k_2)}, \quad m = \frac{k_1}{\gcd(k_1,k_2)}
$$
and 
$$
t_j = \frac{e_j \cdot \gcd(k_1,k_2)}{k_2} = \frac{s_j \cdot \gcd(k_1,k_2)}{k_1}, \quad j = 1, \ldots, n, 
$$
we have 
$$
\langle \alpha \rangle = (\p_1^{t_1} \ldots \p_n^{t_n})^l, \qquad 
\langle \beta \rangle = (\p_1^{t_1} \ldots \p_n^{t_n})^m, 
$$
and 
\begin{equation} \label{eq:klm}
k_1 l = k_2 m.
\end{equation}
Since $h$ is the class number of $K$, the fractional ideal $(\p_1^{t_1} \ldots \p_n^{t_n})^h$ is principal. 
That is, there exists an element $\gamma_0 \in K$ such that 
$$(\p_1^{t_1} \ldots \p_n^{t_n})^h=\langle \gamma_0 \rangle,
$$ 
and thus 
$$
\langle \alpha^h \rangle = \langle \gamma_0^l \rangle, \qquad 
\langle \beta^h \rangle = \langle \gamma_0^m \rangle.
$$
So, there are two units $u,v$ of $\cO_K$ such that 
\begin{equation}  \label{eq:albega0}
\alpha^h = u \gamma_0^l, \qquad  \beta^h = v \gamma_0^m. 
\end{equation}

Now, by~\eqref{eq:albe}, \eqref{eq:klm} and~\eqref{eq:albega0}, we obtain 
\begin{equation} \label{eq:uv}
u^{k_1} = v^{k_2} . 
\end{equation} 
Let $r$ be the rank of the group of units of $\cO_K$. 
By Dirichlet's unit theorem, there exist $r$ fundamental units $w_1,\ldots,w_r\in \cO_K$ such that 
\begin{equation} \label{eq:uvw}
u=\eta_1w_1^{a_1}\ldots w_r^{a_r} \mand v=\eta_2w_1^{b_1}\ldots w_r^{b_r}
\end{equation}
for some roots of unity $\eta_1,\eta_2 \in K$ and  integers $a_1,\ldots,a_r,b_1,\ldots,b_r$. 
Clearly,~\eqref{eq:uvw} also includes the case when the rank  $r=0$.
We substitute~\eqref{eq:uvw} into~\eqref{eq:uv} and deduce that 
$$
\eta_1^{k_1} = \eta_2^{k_2} 
$$
and 
\begin{equation} \label{eq:kab}
 k_1a_j = k_2b_j ,  \qquad j = 1, \ldots, r.
\end{equation} 

By~\eqref{eq:krs} and~\eqref{eq:kab}, there exists a unit $w \in \cO_K$ such that 
$$
u=\eta_1w^l 
\mand 
v=\eta_2w^m, 
$$
where $l$ and $m$ have been defined in the above. 
Substituting this into~\eqref{eq:albega0} and denoting $w\gamma_0$ by $\gamma$   we have  
$$
\alpha^h = \eta_1\gamma^l  
\mand 
\beta^h = \eta_2\gamma^m. 
$$
This completes the proof. 
\end{proof}

\subsection{Gaps between products of powers of fixed primes}
\label{sec:gap} 
We need a  result of  Tijdeman~\cite{Tij1} on a lower bound on the gaps between integers of the form 
$p_1^{s_1}\cdots p_k^{s_k}$ for distinct primes $p_1, \ldots, p_k$ and non-negative
integers $s_1, \ldots, s_k$, $k \ge 2$, see also~\cite{Lan,Tij2}.

\begin{lemma} \label{lem:Gap large} 
Let $S=\{p_1, \ldots, p_k\}$ be a nonempty set of prime numbers and let $m_1 < m_2 <  \ldots$ be the increasing sequence of positive integers  composed of primes from $S$. Then there exists a positve number $c$ which is effectively computable in terms of $S$ such that 
$$
m_{j+1} - m_j \gg  \frac{m_j}{(\log m_j)^{c}} . 
$$
\end{lemma}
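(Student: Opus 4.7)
The plan is to reduce the lower bound on $m_{j+1} - m_j$ to an effective lower bound for a nonzero linear form in the logarithms of the primes in $S$, which is the classical approach of Tijdeman~\cite{Tij1}. First I would write
$$
m_j = p_1^{a_1}\cdots p_k^{a_k} \qquad\text{and}\qquad m_{j+1} = p_1^{b_1}\cdots p_k^{b_k}
$$
with non-negative integer exponents $a_i, b_i$, and set $n_i = b_i - a_i$. Since $m_j < m_{j+1}$, the quantity
$$
\Lambda := \log(m_{j+1}/m_j) = \sum_{i=1}^{k} n_i \log p_i
$$
is a strictly positive $\Z$-linear form in $\log p_1,\ldots,\log p_k$ with integer coefficients, not all zero.

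Next I would bound the exponents. From $p_i^{a_i}\le m_j$ and $p_i^{b_i}\le m_{j+1}\le 2m_j$ (the regime of bounded $m_j$ being harmless by adjusting implied constants), we obtain $B := \max_i |n_i|\le \log(2m_j)/\log 2 \ll \log m_j$. By Baker's theorem on linear forms in logarithms of algebraic numbers (for instance in the effective form of Baker--W\"ustholz), there exists a constant $c>0$, effectively computable in terms of $k$ and the primes $p_1,\ldots,p_k$ (hence of $S$), such that
$$
\Lambda \gg B^{-c} \gg (\log m_j)^{-c}.
$$
The elementary inequality $\exp(\Lambda)\ge 1+\Lambda$ then yields
$$
m_{j+1} - m_j = m_j\bigl(\exp(\Lambda)-1\bigr) \ge m_j\Lambda \gg \frac{m_j}{(\log m_j)^{c}},
$$
which is the claimed estimate.

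The main obstacle is the invocation of Baker's theorem in the second step, which is a deep external ingredient; however, here it is used in its standard form with logarithms of fixed rational primes, so the implied constant and exponent $c$ are effectively computable in terms of $S$ alone. The trivial case $k=1$, in which $m_{j+1}/m_j=p_1$ is constant, does not require this machinery. Sharper refinements such as Gouillon's bound for linear forms in two logarithms, which is invoked elsewhere in the paper to get explicit values like $c_0 = 1/40452$, can be used to optimize the exponent $c$, but for the qualitative statement of the lemma the classical Baker bound is enough.
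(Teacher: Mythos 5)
Your proposal is correct and is exactly the argument used by Tijdeman in~\cite{Tij1}, which is the source the paper cites for this lemma (the paper itself gives no proof). Writing $m_{j+1}/m_j = \exp(\Lambda)$ with $\Lambda = \sum_i n_i \log p_i$ a nonzero integral linear form in the $\log p_i$, bounding the exponent vector by $B \ll \log m_j$, invoking Baker's theorem to get $\Lambda \gg B^{-c}$, and finishing with $e^{\Lambda} - 1 \ge \Lambda$, is precisely the intended chain of reasoning; the effective computability of $c$ in terms of $S$ comes straight from the effectivity of Baker's bound. One tiny point of wording: the parenthetical ``the regime of bounded $m_j$ being harmless'' is aimed at the wrong case. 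The case that should be set aside is $m_{j+1} > 2m_j$, in which the claimed inequality is trivially true since then $m_{j+1}-m_j > m_j$; having disposed of that case you may assume $m_{j+1} \le 2m_j$, which is what justifies $p_i^{b_i} \le 2m_j$ and hence $B \ll \log m_j$. With that small clarification the argument is complete.
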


\section{Proofs of Density Results}

\subsection{Proof of Theorem~\ref{thm:Dense_S_R}}
We first note that it suffices to prove our result for $n=2$, as then we can approximate 
any vector $(x_1,\ldots, x_n)$ by $(v_1,v_2, v_3, \ldots, v_n)\in S^n$,  where 
\begin{itemize}
\item $v_1,v_2$  are multiplicatively dependent and chosen  to approximate $x_1,x_2$ respectively, 
\item $ v_3, \ldots, v_n$ are chosen independently to approximate $x_3, \ldots, x_n$.  
\end{itemize}

Let $(x_1,x_2) \in \R^2$. 
It is enough to prove that for any $\varepsilon > 0$ there exists an element 
of $\cM_2(S)$ which differs from $(x_1,x_2)$ by at most $\varepsilon$ in each coordinate. 
For each $\varepsilon > 0$, we choose a real number 
$\delta>0$, depending only on $\varepsilon$, $x_1$ and $x_2$, such that if $\alpha$ is a
real number with 
\begin{equation} 
\label{eq: good alpha}
-1 -  \delta < \alpha < -1 -  \delta/2, 
\end{equation} 
there exist integers $k$ and $m$ such that 
\begin{equation} 
\label{eq: alpha approx}
\begin{split}
& ||\alpha|^k - |x_1|| <\varepsilon, \quad ||\alpha|^{k+1} - |x_1|| < \varepsilon,  \\
& ||\alpha|^m - |x_2|| <\varepsilon, \quad  ||\alpha|^{m+1} - |x_2|| <\varepsilon . 
\end{split}
\end{equation} 
Since $S$ is dense in $\R$, there is an element $\alpha$ in $S$ satisfying~\eqref{eq: good alpha}.  
Since $S$ is closed under powering, 
we have that the four vectors $(\alpha^k, \alpha^m)$, $(\alpha^k, \alpha^{m+1})$, $(\alpha^{k+1}, \alpha^m)$ 
and $(\alpha^{k+1}, \alpha^{m+1})$ are all in $\cM_2(S)$ and also~\eqref{eq: alpha approx} holds. 
Note that at least one of these four vectors differs from $(x_1,x_2)$ by at most $\varepsilon$ in each coordinate 
(according to the signs of $x_1$ and $x_2$).  
The desired result now follows.

\subsection{Proof of Theorem~\ref{thm:Dense_S_C}}

As in the proof of Theorem~\ref {thm:Dense_S_R} we observe that it suffices to prove our result for $n=2$.

Let $(z_1,z_2) \in \C^2$. We show that there is a sequence of elements 
of  $\cM_2(S)$ which converges to $(z_1, z_2)$.

 We  first prove the result when $z_1z_2=0.$
Without loss of generality we may suppose that $z_1 = 0$. 
If $|z_2| < 1$,  we let $(s_1,s_2,\ldots)$
be a sequence of complex numbers from $S$ with $|s_m| \leq |z_2|$ for $m= 1,2, \ldots $ which converges to $z_2$. 
Then $(s_m^m,s_m)$ is in $\cM_2(S)$ for $m= 1,2, \dots$ and
$$
  \lim_{m \to \infty} (s_m^m,s_m) =(0,z_2).
$$
 On the other hand, if $|z_2| \geq 1$, let $(s_1,s_2,\ldots)$
be a sequence of complex numbers from $S$ with $|s_m| \geq (1+\frac{1}{m})|z_2|$ for $m= 1,2, \ldots $ which converges to $z_2$. 
Then $(s_m^{-m^2},s_m)$ is in $\cM_2(S)$ for $m=1,2, \ldots$, and since $|z_2| \geq 1$,
$$
 \lim_{m \to \infty} (s_m^{-m^2},s_m) =(0,z_2).
   $$

We now suppose that $z_1z_2 \ne 0$. Put 
$$
z_j = |z_j| \exp(2 \pi i \vartheta_j), 
$$
where $i = \sqrt{-1}$ and $0 \le \vartheta_j < 1$, for $j=1,2$. For each positive integer $m$ we put
$$
\omega_m = \(1+\frac{1}{m^2}\) \exp(2 \pi i/m).
$$
Next, let $a_{j,m}$ be the unique integer with 
$$
\(1+\frac{1}{m^2}\)^{a_{j,m}}  \le |z_j| < \(1+\frac{1}{m^2}\)^{a_{j,m}+1}, \quad j =1,2.
$$  
Define $r_{j,m}\in \{0, \ldots, m-1\}$ by the condition $r_{j,m} \equiv a_{j,m} \pmod m$, 
and then choose $b_{j,m} \in \{-r_{j,m} , -r_{j,m}+1,  \ldots, m-r_{j,m}-1\}$ such that 
$$
\frac{b_{j,m} +r_{j,m}}{m} \le \vartheta_j < \frac{b_{j,m} +r_{j,m}+1}{m} , \quad j =1,2.
$$
Observe that $|b_{j,m}|$ is at most $m$, for $j =1,2$. Then 
$$
\lim_{m \to \infty} \omega_m^{a_{j,m} + b_{j,m}} = z_j, \quad j =1,2.
$$
Since $S$ is dense in $\C$, there is an element $t_m$ in $S$ with$$
\left|t_m^{a_{j,m} +b_{j,m}} -  \omega_m^{a_{j,m} +b_{j,m}} \right| < \frac{1}{m},   \quad j =1,2, \ 
 m =1,2,\ldots . 
$$
Thus 
$$
\lim_{m \to \infty} (t_m^{a_{1,m} +b_{1,m}}, t_m^{a_{2,m} +b_{2,m}} )= (z_1,z_2). 
$$
 Since $S$ is closed under powering, 
we see that each $(t_m^{a_{1,m} +b_{1,m}}, t_m^{a_{2,m} +b_{2,m}} )$ is in $\cM_2(S)$, and the result now follows.

\section{Proofs of Bounds on the Covering Radius}

 \subsection{Proof of the lower bound of Theorem~\ref{thm:rho H Z}}
 
 We start with $n=2$. Since the upper bound is trivial, we only need to prove the lower bound. 
Fix a vector $\xb=(H/2,3H/4)$, we have $\| \xb \| \le H$. 
For any vector $\vb=(v_1,v_2)\in \cM_2(\Z)$ with $v_1 > 1$ and $v_2 > 1$, by Lemma~\ref{lem:mult2} we can
  choose a positive integer $a\ge 2$ such that $v_1=a^{s_1}, v_2=a^{s_2}$ for some positive integers $s_1,s_2$. 

Assume that $s_1 \ge s_2$, that is $v_1\ge v_2$.  If $v_1\le 5H/8$, then $\| \xb - \vb \|\ge 3H/4-5H/8=H/8$. 
Otherwise if $v_1 > 5H/8$, we have $\| \xb - \vb \|\ge 5H/8 - H/2=H/8$. So, in this case we obtain 
$$
\| \xb -\vb \| \ge H/8. 
$$

Now, we assume that $s_1 < s_2$. 
If $v_1\le 5H/12$, then $\| \xb - \vb \|\ge H/2-5H/12=H/12$;  
while if $v_1 > 5H/12$, then 
$$
v_2 \ge a v_1 \ge 2 v_1 >5H/6, 
$$
which implies that $\|\xb-\vb\| > 5H/6 - 3H/4 =H/12$. Hence, in this case we have 
$$
\| \xb -\vb \| \ge H/12
$$
as required. 
 
 We now consider the case $n\ge 3$. 
Let $p_1, \ldots, p_n$ be the
first $n$ primes.  We define $q_j$ as the largest power of $p_j$ 
which does not exceed $H/2$; thus we  have 
$$
\frac{H}{2p_j} < q_j \le \frac{H}{2}, \quad j=1, \ldots, n. 
$$
We now set $b=(c+2)n$, where $c$ is the constant in Lemma~\ref{lem:Gap large}
which corresponds to $k=n$ and the above choice of primes. 

We now define the $n$-dimensional box 
\begin{align*}
\fB &= \left[q_1-H/(\log H)^b, q_1+H/(\log H)^b\right] 
\times \\
& \qquad \qquad \ldots  \times  \left[q_n-H/(\log H)^b, q_n+H/(\log H)^b\right], 
\end{align*}
and show that $\cM_n(\Z) \cap \fB =\emptyset$ when $H$ is sufficiently large. 
Indeed, we assume that  there exists $\vb = (v_1,\ldots, v_n) \in \cM_n(\Z) \cap \fB$. 
Then, by Lemma~\ref{lem:exponent} one can choose the exponents 
$k_j$ in~\eqref{eq:0} to satisfy
\begin{equation}
\label{eq:small k_j}
|k_j| \ll (\log H)^{n-1}, \qquad j =1, \ldots, n.
\end{equation}
Since
$$
v_j = q_j + O(H/(\log H)^b) =  q_j\( 1 + O(1/(\log H)^b)\), 
$$
using~\eqref{eq:small k_j} we also have 
$$
v_j^{k_j} =   q_j^{k_j} \(1 + O\(|k_j|/(\log H)^b\)\)
=   q_j^{k_j} \(1 + O\(1/(\log H)^{b-n+1}\)\)
$$
for $j =1, \ldots, n$. 
Hence 
$$
\prod_{j=1}^n  q_j^{k_j} = 1 + O\(1/(\log H)^{b-n+1}\).
$$
Collecting negative and positive exponents we rewrite this as 
\begin{equation}
\label{eq:Q+Q-}
Q_+ = Q_-\(1 + O\(1/(\log H)^{b-n+1}\)\),  
\end{equation}
where
$$
Q_+ = \prod_{\substack{j=1\\k_j > 0}}^n  q_j^{k_j}  \mand
Q_- = \prod_{\substack{j=1\\k_j < 0}}^n  q_j^{-k_j}.
$$
Since by~\eqref{eq:small k_j} we have 
$$
\max \{ \log Q_+, \log Q_- \}  \ll (\log H)^{n}, 
$$
we can rewrite~\eqref{eq:Q+Q-} as 
\begin{equation}
\label{eq:too close}
Q_+ = Q_-\(1 + O\(1/(\log Q_*)^{(b-n+1)/n}\)\)
\end{equation}
where $Q_* = \min\{Q_+ , Q_-\}$.
Since due to our choice of $b$ we have 
$$
(b-n+1)/n = (b+1)/n -1 > c+1 > c, 
$$
we see that~\eqref{eq:too close} contradicts Lemma~\ref{lem:Gap large} when $H$ is sufficiently large.
This in fact completes the proof of the lower bound.

 \subsection{Proof of the upper bound of Theorem~\ref{thm:rho H Z}}
We only need to consider the case $n \ge 3$. 
Let 
$$
\bx = (x_1, \ldots,x_n) \in \Z^n 
$$
with $\|\bx \| \le H$ for large enough $H$. 
Without loss of generality we can assume that
$$
0 < x_1 \le  \ldots \le x_n \le H.
$$
Moreover, if $x_1\le H/(\log H)^{n-2}$, then we take $\bu = (1,x_2,\dots, x_n)$
and get
$$
\|\bx - \bu\| \le H/(\log H)^{n-2}.
$$

Hence we can assume that 
\begin{equation}
\label{eq:large x_1 Z}
H/(\log H)^{n-2} < x_1 \le \ldots \le x_n \le H.
\end{equation}

In addition, the case when $x_1=\dots=x_n$ is trivial, and so we assume that
\begin{equation}
\label{eq:neq}
x_n > x_1.
\end{equation}  

Denote 
$$y_i = \log(x_i/x_1), \qquad  i =2, \ldots, n.
$$
By (\ref{eq:neq}), we have $y_n>0$.
Next, we see from~\eqref{eq:large x_1 Z} that for a sufficiently large $H$ we have
\begin{equation}
\label{eq:small yi}
y_i \ll  \log \log H, \qquad  i =2, \ldots, n.
\end{equation}

We set 
\begin{equation}
\label{eq:def q}
q =  \fl{\frac{\log H}{ n^2 \log \log H}} 
\end{equation}
and, using a $q$-ary expansion of $y_i/y_n \in [0,1]$, choose non-negative integers
$a_{i,j}< q$ where $i=2,\dots,n$, $j=1,\dots,n-2$   such that
\begin{equation}
\label{eq:approx ai/q}
\left|\frac{y_i}{y_n} - \sum_{j=1}^{n-2}\frac{a_{i,j}}{q^j}\right| \le \frac 1{q^{n-2}}, 
\qquad  i =2, \ldots, n.
\end{equation}
Take 
\begin{equation}
\label{eq:def k Z}
k= \fl{(\log H)^{n-1}}
\end{equation}
 and choose positive integers $m_1,\dots,m_{n-2}$ so that
\begin{equation}
\label{eq:approx m/k}
\left|\log(m_j/k) -\frac{y_n}{q^j}\right|\le 1/k, \qquad j=1,\dots,n-2.
\end{equation}
Indeed, this can be done because for $ m \ge k$ we have
$$\log((m+1)/k) - \log(m/k)  = \log (1+1/m) < 1/m \le 1/k.$$

It now follows from~\eqref{eq:approx m/k} that 
\begin{equation}
\begin{split} 
\label{eq:approx ai m/k}
\left| \sum_{j=1}^{n-2} a_{i,j}\log(m_j/k)  - \sum_{j=1}^{n-2} \frac{a_{i,j}}{q^j}  y_n\right| &\\
\le \frac{1}{k} \sum_{j=1}^{n-2}a_{i,j} &< qn/k,  \qquad  i =2, \ldots, n.
\end{split} 
\end{equation}

Furthermore,  the inequalities~\eqref{eq:small yi} and~\eqref{eq:approx ai/q} imply  that 
$$
\left|  \sum_{j=1}^{n-2} \frac{a_{i,j}}{q^j}  y_n- y_i\right| 
= y_n\left|\frac{y_i}{y_n} -  \sum_{j=1}^{n-2} \frac{a_{i,j}}{q^j} \right|
\ll \frac {\log\log H}{q^{n-2}},   \qquad  i =2, \ldots, n.
$$
Combining this bound with~\eqref{eq:approx ai m/k},  we obtain
\begin{equation}
\label{eq:approx yi m/k}
\left|\sum_{j=1}^{n-2} a_{i,j}\log(m_j/k)  - y_i\right| \ll \frac {\log\log H}{q^{n-2}} +  \frac{qn}{k},   
\qquad  i =2, \ldots, n.
\end{equation}

Clearly there exists  $u_1=k^{qn}v$ with a positive integer $v$ satisfying
\begin{equation}
\label{eq:def v}
\left|x_1 - u_1\right|  \le k^{qn} .
\end{equation}
Take $\bu =(u_1,\ldots,u_n)$, where 
$$u_i = u_1   \prod_{j=1}^{n-2}(m_j/k)^{a_{i,j}} =   v k^{s_i} \prod_{j=1}^{n-2}m_j^{a_{i,j}}  
 \in \N
$$
with 
$$
s_i = qn - \sum_{j=1}^{n-2}a_{i,j}>0,    
$$
for $i =2, \ldots, n.$ Clearly, $u_1, \ldots, u_n$ generate a multiplicative subgroup of rank 
at most $n-1$ in $\Q^*$ (which is contained in the multiplicative subgroup   generated by $ u_1, m_1/k, \ldots, m_{n-2}/k$)
 and thus $\bu\in \cM_n(\Z)$. 

We now note that the inequalities~\eqref{eq:large x_1 Z} and~\eqref{eq:def v}  imply 
\begin{equation}
\label{eq:approx x1}
\left|\log u_1  - \log x_1\right| = \left|\log (u_1/x_1) \right| \ll \frac{\left| x_1  -  u_1\right|}{x_1}. 
\ll \frac{k^{qn} (\log H)^{n-2}}{H}. 
\end{equation}

Furthermore, for $ i =2, \ldots, n$, using $\log x_i = \log x_1 + y_i$, we see  that 
\begin{align*}
\left|\log u_i - \log x_i\right| & =\left| \sum_{j=1}^{n-2} a_{i,j}\log(m_j/k) 
+ \log u_1 -  \log x_1 - y_i \right| \\
& \le \left| \sum_{j=1}^{n-2} a_{i,j} \log(m_j/k)  - y_i \right|  
+ \left|  \log u_1 -  \log x_1 \right| ,
\end{align*} 
and  from~\eqref{eq:approx yi m/k} and~\eqref{eq:approx x1} we derive 
\begin{equation}
\label{eq:approx xi}
\left|\log u_i  - \log x_i\right| \ll \frac {\log\log H}{q^{n-2}} +  \frac{qn}{k} 
+  \frac{k^{qn} (\log H)^{n-2}}{H}. 
\end{equation}

 One easily checks  that for  the 
choice of parameters~\eqref{eq:def q} and~\eqref{eq:def k Z} we have 
\begin{align*}
& \frac {\log\log H}{q^{n-2}}  \ll  \frac {(\log\log H)^{n-1}}{(\log H)^{n-2}},\\
& \frac{qn}{k}  \ll \frac {\log\log H}{(\log H)^{n-2}}, \\
 &  \frac{k^{qn} (\log H)^{n-2}}{H} \ll     \frac{\(\(\log H\)^{(n-1)n}\)^{\log H/n^2 \log \log H} (\log H)^{n-2}}{H} \\
& \qquad \qquad \qquad =   \frac{H^{(n-1)/n} (\log H)^{n-2}}{H}
 =    \frac{(\log H)^{n-2}}{H^{1/n}}.
  \end{align*} 
  Clearly the bound~\eqref{eq:approx xi}  absorbs~\eqref{eq:approx x1},  
thus for $ i =1, \ldots, n$ we have
$$
\left|\log u_i  - \log x_i\right| \ll \frac {(\log\log H)^{n-1}}{(\log H)^{n-2}}, 
$$
which implies 
\begin{align*}
u_i  & = x_i\exp\(O\( (\log\log H)^{n-1}/(\log H)^{n-2}\)\)  \\
& = x_i +O\( x_i(\log\log H)^{n-1}/(\log H)^{n-2}\).
  \end{align*} 
Hence,
$$
\|\bx - \bu\|  \ll  H \frac {(\log\log H)^{n-1}}{(\log H)^{n-2}},
$$
which concludes the proof of the upper bound.

 \subsection{Proof of the lower bound of  Theorem~\ref{thm:ImQuad}}
We start with $n=2$. 
Let $h$ be the class number of $K$. 
We first fix a  number $c$ with 
$$
0 < c < 2^{-(h+1)/(2h)}, 
$$
and then fix a vector 
$$
\zb=(z_1,z_2)=(aH,bH) 
$$ 
and another real number $d$ such that  
\begin{equation}   \label{eq:abcd}
0<c< a <d< b < 2^{1/(2h)}c. 
\end{equation}
It is easy to see that $\| \zb \| \le H$.

For any vector $\vb=(v_1,v_2)\in \cM_2(\cO_K)$ with $|v_1| > 1$ and $|v_2| > 1$, by Lemma~\ref{lem:mult2} we can
  choose an element $\gamma \in \cO_K$ with $|\gamma| > 1$ such that $v_1=\eta_1\gamma^{s_1/h}, v_2=\eta_2\gamma^{s_2/h}$ for 
 some roots of unity $\eta_1,\eta_2$ and some positive integers $s_1,s_2$. 
 Clearly, we have 
 \begin{equation} \label{eq:z-v}
 \begin{split}
 \| \zb - \vb \| & = \| (z_1-v_1,z_2-v_2)\| \ge \max\{|z_1-v_1|, |z_2-v_2| \}  \\
 & \ge \max\{\big| |z_1|-|v_1| \big|, \big| |z_2|-|v_2| \big| \}. 
 \end{split}
 \end{equation}
 Besides, since $\gamma \in \cO_K$ with $|\gamma|>1$ and $K$ is an imaginary quadratic field, we have 
 $$
 |\gamma| \ge \sqrt{2}.
 $$

Assume that $s_1 \ge s_2$, that is $|v_1| \ge |v_2|$.   If $|v_1|\le dH$, then 
$$
\big| |z_2| - |v_2| \big|\ge bH-dH=(b-d)H.
$$
 Otherwise, if $|v_1| > dH$, we have 
 $$
 \big| |z_1| - |v_1| \big|\ge dH - aH=(d-a)H.
 $$
  So, in this case using~\eqref{eq:z-v} we obtain 
\begin{equation}  \label{eq:z-v1}
\| \zb -\vb \| \ge \min\{(b-d)H, (d-a)H\}. 
\end{equation}

Now, we assume that $s_1 < s_2$. 
If $|v_1|\le cH$, then 
$$
\big| |z_1| - |v_1| \big| \ge aH-cH=(a-c)H;
$$  
while if $|v_1| > cH$, then 
$$
|v_2| \ge |\gamma|^{1/h} |v_1| \ge 2^{1/(2h)} |v_1| > 2^{1/(2h)}cH, 
$$
which implies that 
$$
\big| |z_2|- |v_2| \big| > 2^{1/(2h)}cH - bH = (2^{1/(2h)}c-b)H.
$$
 Hence, in this case using~\eqref{eq:z-v} we have 
\begin{equation}  \label{eq:z-v2}
\| \zb -\vb \| \ge \min\{ (a-c)H,  (2^{1/(2h)}c-b)H\}. 
\end{equation}
Combining~\eqref{eq:z-v1}, \eqref{eq:z-v2} with~\eqref{eq:abcd}, 
we conclude the proof for the case $n=2$. 

For the case $n \ge 3$, we recall the box $\fB$ defined in the proof of Theorem~\ref{thm:rho H Z}. 
Applying the same arguments as before, we obtain that  for sufficiently large $H$, 
$$
\{(|v_1|,\ldots,|v_n|): \, (v_1,\ldots,v_n) \in \cM_n(\cO_K)\} \cap \fB = \emptyset, 
$$
where we also need to use the fact that for any $\alpha \in \cO_K$, if $|\alpha| \le H$, 
then for its height we have $\log \wH(\alpha) \ll \log H$, and so $\log \wH(|\alpha|) \ll \log H$ (because $K$ is an imaginary quadratic field).  
This gives the desired lower bound.

 \subsection{Proof of the upper bound of  Theorem~\ref{thm:ImQuad}}
We only need to prove the upper bound for $n \ge 3$.   In fact it is sufficient 
to consider only the case $n=3$, since this automatically means that for any 
$n \ge 3$ and $\bx\in\cO_K^n$,   there is
a vector $\bu\in \cM_n(\cO_K)$ such that
the distance $\|\bx - \bu\|$ is small.  

Let $\bx = (x_1, x_2, x_3) \in \cO_K^3$ with $\|\bx \| \le H$ for large enough $H$. 
We want to show that there is
a vector $\bu\in \cM_3(\cO_K)$ such that
the distance $\|\bx - \bu\|$ is small.

Without loss of generality we can assume that
$$
0 < |x_1| \le  |x_2| \le |x_3| \le H.
$$
Moreover, if $|x_1|\le H/\log H$, then we take $\bu = (1,x_2,x_3)$
and get
$$
\|\bx - \bu\| \le H/\log H.
$$

Hence we can assume that
\begin{equation}
\label{eq:large x_1 OK}
H/\log H < |x_1| \le |x_2| \le |x_3| \le H.
\end{equation}

Denote
$$
q=\fl{\sqrt{\log H}/(2\log\log H)} \mand 
\lambda = \exp\left(\frac{1}{q^2}  + \frac{2\pi i}q\right), 
$$
where $i = \sqrt{-1}$. 
Our plan is to approximate the numbers $x_2$ and $x_3$ by 
$z_{j_2}$ and $z_{j_3}$, respectively, where $j_2$ and $j_3$
are non-negative integers,  where
$$z_j = \lambda^j x_1,\qquad j\ge 0,$$ 
and then approximate $x_1, z_{j_2}, z_{j_3}$ by multiplicatively dependent elements  
$u_1,u_2,u_3$ of $\cO_K$.

Let
$$\nu_1 = \fl{q\log(|x_2|/|x_1|)}.$$
By~\eqref{eq:large x_1 OK}, we get
\begin{equation}
\label{eq:est_nu1}
0\le \nu_1 \le q\log\log H.
\end{equation}
One verifies that 
$$
| \lambda|^{(\nu_1+1)q}  \le  | \lambda|^{q^2\log(|x_2|/|x_1|) + q} = \exp(1/q) |x_2|/|x_1|
$$
and 
$$
| \lambda|^{\nu_1q}  \ge | \lambda|^{q^2\log(|x_2|/|x_1|)- q}  = |x_2|/ (|x_1|  \exp(-1/q)).
$$
Hence, for every integer $j$ with $\nu_1q \le j < (\nu_1+1)q$,  we have
\begin{equation}
\label{eq:est_modul}
|\log(|x_2|/|\lambda^j x_1|)|\le 1/q.
\end{equation}  

We assume that the principal argument $\arg(z)$ of a non-zero complex number $z$ always belongs to 
the interval  $[0,2\pi)$ and denote
$$
a=\arg(x_2/x_1)\in[0,2\pi).
$$  
Take
$$
\nu_2 =\fl{qa/(2\pi)}.
$$
Clearly,  
\begin{equation}
\label{eq:est_nu2}
0\le\nu_2 <q.
\end{equation}
Then, since $\lambda^q \in \R$,  we have 
\begin{equation}
\label{eq:est_arg}
\begin{split}
 \arg\(x_2/\(\lambda^{\nu_1q+\nu_2}x_1\)\) & = \arg\(x_2/\(\lambda^{\nu_2}x_1\)\) \\
 & = \arg\( \lambda^{\{qa/(2\pi)\}}\)   \in[0,2\pi/q), 
\end{split} 
\end{equation}
where $\{\xi\}$ denotes the fractional part of a real $\xi$.  

Define
$$j_2 = \nu_1q+\nu_2.$$
We conclude from~\eqref{eq:est_nu1} and~\eqref{eq:est_nu2}  that
\begin{equation}
\label{eq:est_j2}
0\le j_2 \le Q,
\end{equation}
where 
\begin{equation}
\label{eq:def Q}
Q= \fl{q^2\log\log H} +q.
\end{equation}
Furthermore, from~\eqref{eq:est_modul} and~\eqref{eq:est_arg} we see  that
\begin{equation}
\label{eq:approx_x2}
 |x_2-\lambda^{j_2}x_1|\ll |x_2|/q.
\end{equation}

Similarly, we can choose $j_3$ satisfying
\begin{equation}
\label{eq:est_j3}
0\le j_3 \le Q
\end{equation}
and
\begin{equation}
\label{eq:approx_x3}
 |x_3-\lambda^{j_3}x_1|\ll |x_3|/q.
\end{equation}

Take
\begin{equation}
\label{eq:def k OK}
k= \fl{q^3\log\log H}.
\end{equation}
Since  $\cO_K$  has a structure of a lattice, we can choose $\beta \in\cO_K$ such that $|\beta - k\lambda| \ll 1$, and so 
\begin{equation}
\label{eq:approx mu/k}
\left|\beta/k -\lambda\right|\ll 1/k \ll q^{-3}(\log\log H)^{-1}. 
\end{equation}

Set
$$\widetilde\lambda = \beta/k.$$
Writing $\zeta =  \widetilde\lambda /\lambda -1$ we observe that 
$$
(1+\zeta)^j = 1 + O(j \zeta)
$$
provided $j \zeta < 1/2$.  
By~\eqref{eq:approx mu/k} 
$$
Q \left|\widetilde\lambda /\lambda - 1 \right| \ll Q/k \ll 1/q, 
$$
and so for any integer  $j$  with $0\le j \le Q$
we have
$$\left|\widetilde\lambda^j -\lambda^j\right| \ll j  |\lambda^j| \left|\widetilde\lambda / \lambda - 1\right|  
\le
Q  |\lambda^j| \left|\widetilde\lambda / \lambda - 1\right| \ll |\lambda^j|/q.$$
In particular, by~\eqref{eq:est_j2} and~\eqref{eq:est_j3} this holds for $j = j_2$ and $j_3$. Now, recalling~\eqref{eq:approx_x2} and~\eqref{eq:approx_x3},  we obtain
\begin{equation}
\label{eq:approx_x23}
|x_2-\widetilde\lambda^{j_2}x_1|\ll |x_2|/q 
\mand 
|x_3-\widetilde\lambda^{j_3}x_1|\ll |x_3|/q.
\end{equation}

Taking into account~\eqref{eq:def Q}  and~\eqref{eq:def k OK} we find that
\begin{align*}&Q  \le \left(\frac14+o(1)\right)\log H/\log\log H,\\
& \log k\le\left(\frac32+o(1)\right)\log\log H,
\end{align*}
and we conclude that
$$k^Q \ll (H/\log H)^{1/2} \ll |x_1|^{1/2}.$$
Then, we can choose $v\in\cO_K$ such that $|x_1 / k^Q - v| \ll 1$, and so 
\begin{equation}
\label{eq:approx_x1}
|x_1 -k^Q v| \ll k^Q \ll |x_1|^{1/2} \ll |x_1|/q.
\end{equation}

We set
\begin{align*}
&u_1 = k^Q v,\\
&u_2 = \widetilde\lambda^{j_2} u_1 = k^{Q-j_2} \beta^{j_2} v,\\
&u_3 = \widetilde\lambda^{j_3} u_1 = k^{Q-j_3} \beta^{j_3} v.
\end{align*}
Clearly, $u_1,u_2,u_3$ are multiplicatively dependent elements of
$\cO_K$ (since they belong to the multiplicative group generated by $u_1$ and $\widetilde\lambda$.  Finally, we conclude from~\eqref{eq:approx_x23} 
and~\eqref{eq:approx_x1} that
$$|x_j - u_j| \ll |x_j|/q, \qquad j=1,2,3,$$
implying
$$
\|\bx - \bu\|  \ll  H \frac {\log\log H}{(\log H)^{1/2}} \quad \textrm{for}  \quad \bu=(u_1, u_2, u_3), 
$$
which concludes the proof.

\section{The hypotheses of Theorems~\ref{thm:Dense_S_R} and~\ref{thm:Dense_S_C} }
\label{sec:rem} 

In this section, we show that in Theorem~\ref{thm:Dense_S_R} and Theorem~\ref{thm:Dense_S_C} the property of $S$ being closed under powering cannot be removed. 

For Theorem~\ref{thm:Dense_S_R} we let $S$ be the set of all rational numbers of the form $p/q$ or $-p/q$  with distinct primes   $p, q$.  
Then by~\cite[Theorem~4]{HS} the set $S$ is dense in $\R$ and we now show   
that  $\cM_n(S)$ is not dense in $\R^n$ for any $n \ge 2$. 

Let $(x_1, \ldots, x_n) \in \cM_n(S)$. Then, there are  integers $k_1, \ldots, k_n$, not all zero,  such  that 
\begin{equation}  \label{eq:xi}
x_1^{k_1} \cdots x_n^{k_n} = 1. 
\end{equation}
Indeed, as a first step we 
show that  there are integers  $k_1, \ldots, k_n  \in \{-1,0,1\}$, not all zero, 
such that 
$$
|x_1^{k_1} \cdots x_n^{k_n}| = 1. 
$$
Note that while it is possible to use  {\it Siegel's Lemma}~\cite[Page~213, Hilfssatz]{Siegel} 
(see also~\cite{BombVaal,Vaal}) to show that there exists a nontrivial solution of \eqref{eq:xi} 
with $k_1, \ldots, k_n$ bounded from above as a function of $n$, which is enough 
for our purpose, we  
give a more direct argument to establish this stronger claim. 

Put  
$$ 
y_i  =  
\begin{cases} 
x_i, & \text{if}\ k_i \ge 0,\\
x_i^{-1}, & \text{if}\  k_i<0, 
\end{cases} \qquad i =1, \ldots, n, 
$$ 
and write 
$$
y_i = \eps_i \frac{p_i}{q_i}
$$
with $\eps_i \in \{-1, 1\}$, $i =1, \ldots, n$.  Let $k_{i_1}, \ldots, k_{i_t}$ be all the non-zero 
integers from~\eqref{eq:xi}. Then
\begin{equation}  \label{eq:yi}
y_{i_1}^{|k_{i_1}|} \cdots y_{i_t}^{|k_{i_t}|} = 1. 
\end{equation}
Observe that by~\eqref{eq:yi} we have 
\begin{equation}  \label{eq:sets}
\{p_{i_1}, \ldots, p_{i_t}\} = \{q_{i_1}, \ldots, q_{i_t}\}. 
\end{equation} 
We claim that there exists distinct integers $j_1, \ldots, j_r$ from the set 
$\{i_1, \ldots, i_t\}$ so that
\begin{equation}  \label{eq:yi=1}
|y_{j_1}  \cdots y_{j_r}| = 1. 
\end{equation} 
To see this, consider the path that starts at $p_{i_1}$ and continues according to the following rules. 
If we are at $p_{i_m}$ we connect $p_{i_m}$ with $q_{i_m}$. Next  $q_{i_m}$ is connected to  $p_{i_s}$, where $s$ is the smallest index with $p_{i_s}=q_{i_m}$. 
This step is always possible by virtue of~\eqref{eq:sets}.
If $p_{i_s}$ has already been traversed by the path, we stop. Observe that this gives us a path which terminates in a cycle and the cycle gives us a solution to~\eqref{eq:yi=1}. 

Let $\alpha_1, \ldots, \alpha_n$ be non-zero real numbers, 
and we also assume that for all $n$-tuples $(\delta_1, \ldots, \delta_n) \ne (0, \ldots, 0)$ with $\delta_i\in \{-1,0,1\}$, $ i =1, \ldots, n$,  we have 
$$
\alpha_1^{\delta_1} \cdots \alpha_n^{\delta_n} \ne \pm 1.
$$
For example, we can choose 
\begin{equation}  \label{example}
\(\alpha_1, \ldots, \alpha_n\) = \(2, 2^3, \ldots , 2^{3^{n-1}}\).
\end{equation}
Notice that there is a positive number $c $ such that 
\begin{equation}  \label{eq:>c}
\left| \alpha_1^{\delta_1} \cdots \alpha_n^{\delta_n}- 1\right | > c
\mand 
\left| \alpha_1^{\delta_1} \cdots \alpha_n^{\delta_n}+1\right | > c
\end{equation} 
for any non-zero $n$-tuple $(\delta_1, \ldots, \delta_n)$ with $\delta_i\in \{-1,0,1\}$, $ i =1, \ldots, n$.  Since every element $(x_1, \ldots, x_n) \in \cM_n(S)$ satisfies an identity of the form~\eqref{eq:yi=1}, it follows from~\eqref{eq:>c} that there is a small ball around 
$(\alpha_1, \ldots, \alpha_n)$ which does not contain any element of $\cM_n(S)$.
As a consequence, we see that $\cM_n(S)$ is not dense in $\R^n$. 

For Theorem~\ref{thm:Dense_S_C} we let $S$ be the set of complex numbers of the form $\zeta p/q$ where $\zeta$ is a root of unity and $p$ and $q$ are distinct primes. Since the roots of unity are dense in the unit circle and the quotients of the primes are dense in the positive real numbers we see that $S$ is dense in $\C$.

We now repeat our argument as before but with
$$
y_i = \zeta_i \frac{p_i}{q_i}
$$
where $\zeta_i$ is a root of unity and $p_i$ and $q_i$ are distinct primes for $i =1, \ldots, n$.
We again find that~\eqref{eq:yi=1} holds. Let $(\alpha_1, \ldots ,\alpha_n)$ be an $n$-tuple of non-zero complex numbers with
$$
|\alpha_1^{\delta_1} \cdots \alpha_n^{\delta_n}| \ne 1
$$
for any non-zero $n$-tuple $(\delta_1, \ldots, \delta_n)$ with 
$$
\delta_i\in \{-1,0,1\}, \qquad i =1, \ldots, n.
$$ 
Plainly~\eqref{example}  gives such an $n$-tuple. Then there is a small ball around $(\alpha_1, \ldots ,\alpha_n)$ which does not contain any element of $\cM_n(S)$ and so $\cM_n(S)$ is not dense in $\C^n$.

\section*{Acknowledgements}
The authors would like to thank the referee for careful reading and valuable comments. 
The first author was able to participate in this project
due to the Nineteenth Annual Workshop on Combinatorial and Additive Number Theory
(CANT 2021, May 24--28, 2021);  he is very grateful
to Melvyn Nathanson who organized this beautiful conference. 
The second author was partly supported by a Macquarie University Research Fellowship 
and by the Australian Research Council Grant DE190100888 and also by the Guangdong Basic and Applied Basic Research Foundation (No. 2022A1515012032). 
The second   and the third authors were supported by the Australian
Research Council Grant DP170100786. 
The research of the fourth author was supported in part by the Canada Research Chairs Program and by Grant A3528 from the Natural Sciences and Engineering Research Council of Canada.

\end{document}